\definecolor{darkblue}{rgb}{0.0,0.0,0.3}
\theoremstyle{plain}
\newtheorem{theorem}{Theorem}[section]
\newtheorem*{theorem*}{Theorem}
\newtheorem{lemma}[theorem]{Lemma}
\newtheorem{proposition}[theorem]{Proposition}
\newtheorem*{proposition*}{Proposition}
\newtheorem{corollary}[theorem]{Corollary}
\newtheorem*{corollary*}{Corollary}
\theoremstyle{definition}
\newtheorem{remark}[theorem]{Remark}
\numberwithin{equation}{section}
\renewcommand{\Im}{\operatorname{Im}}
\renewcommand{\Re}{\operatorname{Re}}
\DeclareMathOperator{\SL}{SL}
\DeclareMathOperator*{\Res}{Res}
\title[Sums of Fourier Coefficients]{The Second Moment of Sums of Coefficients of Cusp Forms}
\author[Hulse]{Thomas A. Hulse}
\thanks{Research of the first author was supported by a Coleman Postdoctoral Fellowship at Queen's University.}
\author[Kuan]{Chan Ieong Kuan}
\author[Lowry-Duda]{David Lowry-Duda}
\thanks{The third author is supported by the National Science Foundation Graduate Research Fellowship Program under Grant No. DGE 0228243.}
\author[Walker]{Alexander Walker}
\begin{document}

\maketitle

\begin{abstract}
  Let $f$ and $g$ be weight $k$ holomorphic cusp forms and let $S_f(n)$ and $S_g(n)$ denote the sums of their first $n$ Fourier coefficients.
  Hafner and Ivi\'{c}~\cite{HafnerIvic89} proved asymptotics for $\sum_{n \leq X} \lvert S_f(n) \rvert^2$ and proved that the Classical Conjecture, that $S_f(X) \ll X^{\frac{k-1}{2} + \frac{1}{4} + \epsilon}$, holds on average over long intervals.

  In this paper, we introduce and obtain meromorphic continuations for the Dirichlet series $D(s, S_f \times S_g) = \sum S_f(n)\overline{S_g(n)} n^{-(s+k-1)}$ and $D(s, S_f \times \overline{S_g}) = \sum_n S_f(n)S_g(n) n^{-(s + k - 1)}$.
  We then prove asymptotics for the smoothed second moment sums $\sum  S_f(n)\overline{S_g(n)} e^{-n/X}$, giving a smoothed generalization of~\cite{HafnerIvic89}.
  We also attain asymptotics for analogous sums of normalized Fourier coefficients.
  Our methodology extends to a wide variety of weights and levels, and comparison with~\cite{chandrasekharan1962functional} indicates very general cancellation between the Rankin-Selberg $L$-function $L(s, f\times g)$ and convolution sums of the coefficients of $f$ and $g$.

  In forthcoming works, the authors apply the results of this paper to prove the Classical Conjecture on $\lvert S_f(n) \rvert^2$ is true on short intervals, and to prove sign change results on $\{S_f(n)\}_{n \in \mathbb{N}}$.
\end{abstract}

\section{Introduction and Statement of Results}

Let $f$ be a holomorphic cusp form on a congruence subgroup $\Gamma \subseteq \SL_2(\mathbb{Z})$ and of positive weight $k$, where $k \in \mathbb{Z}\cup(\mathbb{Z}+\frac{1}{2})$.
Let the Fourier expansion of $f$ at $\infty$ be given by
\[
  f(z) = \sum_{n \geq 1} a(n)e(nz),
\]
where $e(z) = e^{2\pi iz}$.
In this paper, we consider upper bounds for the second moment of the partial sums of the Fourier coefficients,
\[
  S_f(n) := \sum_{m \leq n}a(m).
\]

Bounds on the coefficients $a(n)$ are of great interest and have wide application.
The famous Ramanujan-Petersson conjecture, which was proven to hold for integral weight holomorphic cusp forms as a consequence of Deligne's proof of the Weil Conjecture~\cite{Deligne}, gives us that $a(n)\ll n^{\frac{k-1}{2} + \epsilon}$ and from this one might naively assume $S_f(X) \ll X^{\frac{k-1}{2} + 1 + \epsilon}$.
However, there is significant cancellation in the sum and we expect the far better bound,
\begin{equation}
  S_f(X) \ll X^{\frac{k-1}{2} + \frac{1}{4} + \epsilon},
\end{equation}
which we refer to as the ``Classical Conjecture,'' echoing Hafner and Ivi\'{c} in their work~\cite{HafnerIvic89}.

Chandrasekharan and Narasimhan, as a consequence of their much broader work on the average order of arithmetical functions~\cite{chandrasekharan1964mean},~\cite{chandrasekharan1962functional}, proved that the Classical Conjecture is true \emph{on average} by showing that
\begin{equation}\label{eq:OnAverageSquares}
  \sum_{n \leq X} \lvert S_f(n) \rvert^2 = CX^{k- 1 + \frac{3}{2}} + B(X),
\end{equation}
where $B(x)$ is an error term,
\begin{equation}\label{beebound}
  B(X) = \begin{cases}
    O(X^{k}\log^2(X)) \\
    \Omega\left(X^{k - \frac{1}{4}}\frac{(\log \log \log X)^3}{\log X}\right),
  \end{cases}
\end{equation}
and $C$ is the constant,
\begin{equation}\label{eq:constantCfromHaffner}
  C = \frac{1}{(4k + 2)\pi^2} \sum_{n \geq 1}\frac{\lvert a(n) \rvert^2}{n^{k + \frac{1}{2}}}.
\end{equation}
A application of the Cauchy-Schwarz inequality to~\eqref{eq:OnAverageSquares} leads to the on-average statement that
\begin{equation}
  \frac{1}{X} \sum_{n \leq X} |S_f(n)| \ll X^{\frac{k-1}{2} + \frac{1}{4}}.
\end{equation}
From this, Hafner and Ivi\'{c}~\cite{HafnerIvic89} were able to show that for holomorphic cusp forms with real coefficients of full integral weight on $\SL_2(\mathbb{Z})$,
\begin{equation}\label{eq:hafnerivic_onethird}
  S_f(X) \ll X^{\frac{k-1}{2} + \frac{1}{3}}.
\end{equation}
Their argument requires cusp forms of full level and the Ramanujan-Petersson Conjecture, but one can state analogous results for general level in terms of the corresponding best-known progress towards the conjecture.

Better lower bounds are known for $B(X)$.
In the same work,~\cite{HafnerIvic89}, Hafner and Ivi\'{c} improved the lower bound of Chandrasekharan and Narasimhan for full-integral weight forms of level one and showed that
\begin{equation}
  B(X) = \Omega\left(X^{k  - \frac{1}{4}}\exp\left(D \tfrac{(\log \log x )^{1/4}}{(\log \log \log x)^{3/4}}\right)\right),
\end{equation}
for a particular constant $D$.

In general, it is possible to translate upper bounds for $B(X)$ into upper bounds for $S_f(X)$.
By the Ramanujan-Petersson Conjecture, we have $\lvert S_f(X +\ell) \rvert^2 \gg \lvert S_f(X) \rvert^2$ for all $\ell \ll  \alpha(X)$ where $\alpha(X):=\lvert S_f(X)\rvert X^{-\frac{k-1}{2}-\epsilon}$, for any small $\epsilon > 0$, and $X$ sufficiently large.
It follows from~\eqref{eq:OnAverageSquares}, and the binomial expansion of $(X \pm \alpha(X))^{k+\frac{1}{2}}$ that
\begin{equation*}
  \frac{\lvert S_f(X) \rvert^3}{X^{\frac{k-1}{2}+\epsilon}} \ll \sum_{\lvert n - X \rvert \leq \alpha(X)} \lvert S_f(n) \rvert^2 = O\left( X^{\frac{k-1}{2} + \frac{1}{2} - \epsilon}\lvert S_f(X)\rvert + B(X)\right),
\end{equation*}
so that $\lvert S_f(X) \rvert \ll X^{\frac{k-1}{2} + \frac{1}{4}}$ or $\lvert S_f(X) \rvert \ll B(X)^{\frac{1}{3}}X^{\frac{k-1}{6}}$.
The first possibility is the Classical Conjecture and the second relates $B(X)$ to $S_f(X)$.
In particular, the bound $B(X) \ll X^{k}\log^2(X)$ from~\cite{chandrasekharan1962functional} nearly allows us to recover Hafner and Ivi\'{c}'s upper bound~\eqref{eq:hafnerivic_onethird}.

In this paper, we consider Dirichlet series associated to the norm-squared partial sums, $\lvert S_f(n) \rvert^2$, and  squared partial sums, $S_f(n)^2$,
\begin{equation}\label{eq:Dsff}
  D(s, S_f\times S_f) := \sum_{n \geq 1}\frac{\lvert S_f(n) \rvert^2}{n^{s + k - 1}}, \quad D(s, S_f\times \overline{S_f}) := \sum_{n \geq 1} \frac{S_f(n)^2}{n^{s + k - 1}}.
\end{equation}
The extra factor of $n^{k-1}$ appearing in the denominator serves to normalize the $L$-functions and shifted convolution sums that appear in the decomposition of these series, as we will see in Section~\ref{sec:decomposition}, so that every functional equation is of the form $s \mapsto 1-s$.
We choose our notation to be reminiscent of the notation for the Rankin-Selberg convolution.
We have been unable to find previous occurrences of the above series in the literature, but sufficient understanding of the analytic properties of $D(s, S_f \times \overline{S_f})$ and $D(s, S_f \times S_f)$ allows new insights and avenues for investigating the behaviour of $S_f(n)$.

More generally, we also study the Dirichlet series associated to the product of the partial sums of two weight $k$ holomorphic cusp forms,  $f$ and $g$,
\begin{equation}\label{eq:Dsfg}
  D(s, S_f \times S_g) := \sum_{n \geq 1}\frac{S_f(n) \overline{S_g(n)}}{n^{s+k-1}}, \quad D(s, S_f \times \overline{S_g}) := \sum_{n \geq 1}\frac{S_f(n)S_g(n)}{n^{s + k - 1}}.
\end{equation}
Initially we work only with $f,g$ of full-integral weight and of level one.
The methodology is almost identical for $D(s, S_f \times \overline{S_g})$ as it is for $D(s, S_f \times S_g)$, so we exposit only for $D(s, S_f \times S_g)$ and state the results for $D(s, S_f \times \overline{S_g})$.
In Section~\ref{sec:decomposition}, we decompose $D(s, S_f \times S_g)$ into pieces which we refer to as the \emph{diagonal} and \emph{off-diagonal} contributions.
Our main result is the meromorphic continuation of these \emph{diagonal} and \emph{off-diagonal} pieces, given by Theorem~\ref{thm:Wsfgmero} in Section~\ref{sec:Zswfg}, ultimately giving the meromorphic continuations of $D(s, S_f\times S_g)$ and $D(s, S_f \times \overline{S_g})$.

The major difficulty arises in determining the analytic behaviour of the \emph{off-diagonal}, which involves the shifted convolution sum
\[
  Z(s, w, f\times g) := \sum_{n,h \geq 1} \frac{a(n)\overline{b(n-h)}+a(n-h)\overline{b(n)}}{n^{s + k - 1}h^w},
\]
where the $a(n)$ and $b(n)$ are coefficients of $f$ and $g$, respectively. Our approach and notation to determine the analytic behaviour of $Z(s, w, f\times g)$ is similar to that in~\cite{HoffsteinHulse13}, though the technique originates with Selberg~\cite{Se}. In particular, the above symmetrized form of $Z(s,w,f \times g)$ is reminiscent of a construction devised by Bringmen, Mertens and Ono in \cite{Ono}.
In Section~\ref{sec:Zswfg}, we describe the behaviour of $Z(s, 0, f\times f)$ and show that there is remarkable cancellation at potential poles, both within the sum in its spectral expansion and with the pole due to the Rankin-Selberg $L$-function $L(s, f\times f)$.

We use an inverse Mellin transform and apply the meromorphic continuation of $D(s, S_f \times S_g)$ in Section~\ref{sec:second_moment_of_sums} to obtain the following smoothed partial sum result.

\begin{theorem}\label{thm:second_moment}  Suppose that $f$ and $g$ are holomorphic cusp forms of $\SL_2(\mathbb{Z})$ of integral weight $k$. Then for any $\epsilon > 0$,
  \begin{equation} \label{maineq1}
    \frac{1}{X} \sum_{n \geq 1}\frac{S_f(n)\overline{S_g(n)}}{n^{k - 1}}e^{-n/X} = CX^{\frac{1}{2}} + O_{f,g,\epsilon}(X^{-\frac{1}{2} + \theta + \epsilon})
  \end{equation}
  and
   \begin{equation} \label{maineq2}
    \frac{1}{X} \sum_{n \geq 1}\frac{S_f(n)S_g(n)}{n^{k - 1}}e^{-n/X} = C'X^{\frac{1}{2}} + O_{f,g,\epsilon}(X^{-\frac{1}{2} + \theta + \epsilon}),
  \end{equation}
  where
  \begin{equation*}
    C = \frac{\Gamma(\tfrac{3}{2}) }{4\pi^2} \frac{L(\frac{3}{2}, f\times g)}{\zeta(3)}= \frac{\Gamma(\tfrac{3}{2})}{4\pi ^2} \sum_{n \geq 1} \frac{a(n)\overline{b(n)}}{n^{k + \frac{1}{2}}},
  \end{equation*}
  and
    \begin{equation*}
    C' = \frac{\Gamma(\tfrac{3}{2})}{4\pi^2} \frac{L(\frac{3}{2}, f\times \overline{g})}{\zeta(3)}  = \frac{\Gamma(\tfrac{3}{2})}{4\pi ^2} \sum_{n \geq 1} \frac{a(n)b(n)}{n^{k + \frac{1}{2}}},
  \end{equation*}
  with $\theta = \max_j (\Im t_j)$, where $\frac{1}{2} + it_j$ correspond to the types of each form in an orthogonal basis of Maass cusp forms. (For $\SL_2(\mathbb{Z})$ we know that $\theta=0$.)
\end{theorem}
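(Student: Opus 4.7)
The plan is to recover the smoothed sum via Mellin inversion and then shift the contour past the main pole of $D(s, S_f \times S_g)$, using the meromorphic continuation furnished by Theorem~\ref{thm:Wsfgmero}. Starting from the Mellin identity $e^{-y} = \frac{1}{2\pi i}\int_{(\sigma)} \Gamma(s) y^{-s}\,ds$ for $\sigma > 0$ and applying it termwise, we obtain
\begin{equation*}
  \sum_{n \geq 1} \frac{S_f(n)\overline{S_g(n)}}{n^{k-1}} e^{-n/X} = \frac{1}{2\pi i} \int_{(\sigma)} D(s, S_f \times S_g)\, \Gamma(s)\, X^s \, ds
\end{equation*}
for $\sigma$ large enough (say $\sigma = 2$) that the Dirichlet series converges absolutely.

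Next we shift the line of integration leftward to $\Re(s) = \tfrac{1}{2} + \theta + \epsilon$. The rightmost pole sits at $s = 3/2$, originating from the diagonal piece whose leading behavior matches the Rankin--Selberg $L$-function near $s = 3/2$; its residue should equal $\frac{1}{4\pi^2}\, L(3/2, f \times g)/\zeta(3)$, which combined with the $\Gamma(\tfrac{3}{2})\,X^{3/2}$ factor yields the main term $CX^{3/2}$ and, after dividing by $X$, the advertised $CX^{1/2}$. Between $s = 3/2$ and the new contour we must also collect residues at the spectral poles of the shifted-convolution piece $Z(s, 0, f \times g)$: for $\SL_2(\mathbb{Z})$ these sit on $\Re(s) = \tfrac{1}{2}$ (with $\theta = 0$), while more generally they lie in the strip $\Re(s) \in [\tfrac{1}{2} - \theta, \tfrac{1}{2} + \theta]$. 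Each such residue contributes $O(X^{1/2 + \theta + \epsilon})$ once multiplied by $\Gamma(s)$ and summed over the spectrum via polynomial spectral bounds.

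Bounding the remaining integral on $\Re(s) = \tfrac{1}{2} + \theta + \epsilon$ combines polynomial growth in vertical strips for $D(s, S_f \times S_g)$---inherited from the Rankin--Selberg $L$-function and from the growth control on the off-diagonal that accompanies Theorem~\ref{thm:Wsfgmero}---with the exponential decay of $\Gamma(s)$ in the imaginary direction. Together these produce an $O(X^{1/2 + \theta + \epsilon})$ bound on the shifted integral, matching the residue-sum bound and giving the stated $O(X^{-1/2 + \theta + \epsilon})$ error after division by $X$. The argument for $D(s, S_f \times \overline{S_g})$ is structurally identical, with $L(3/2, f \times \overline{g})$ replacing $L(3/2, f \times g)$.

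The principal obstacle is the spectral bookkeeping in the strip $\tfrac{1}{2} \le \Re(s) \le \tfrac{3}{2}$: one must verify that the cancellations between the Rankin--Selberg pole in the diagonal and the shifted-convolution contributions in the off-diagonal, highlighted in Theorem~\ref{thm:Wsfgmero}, leave no secondary main term of size between $X^{1/2}$ and $X^{3/2}$, and that the residues at $s = \tfrac{1}{2} \pm it_j$ decay rapidly enough in $|t_j|$ for the spectral sum to converge at the claimed rate. Once this bookkeeping is settled, the contour shift and integral estimate are routine.
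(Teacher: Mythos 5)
Your overall strategy---Mellin inversion against $\Gamma(s)X^s$, a shift of the $s$-contour to $\Re s = \tfrac12+\theta+\epsilon$, the main term from the rightmost pole, and polynomial growth in vertical strips plus the decay of $\Gamma(s)$ for the shifted integral---is exactly the paper's. But the mechanism you give for the main term is not the one that operates, and it hides the step that is the actual content of the proof. The pole of $D(s,S_f\times S_g)$ at $s=\tfrac32$ does \emph{not} come from the diagonal piece: $L(s,f\times g)/\zeta(2s)$ is analytic at $s=\tfrac32$, and its only pole, at $s=1$, is exactly cancelled by $Z(s,0,f\times g)$ (Lemma~\ref{lemma:Zswfg}). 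In the decomposition of Proposition~\ref{prop:decomposition}, the $s=\tfrac32$ pole is produced by the off-diagonal Mellin--Barnes term: one must shift the inner $z$-contour in \eqref{eq:smoothintegral2} past the pole of $\zeta(z)$ at $z=1$, obtaining $W(s-1;f,g)\,\Gamma(s)X^s/(s+k-2)$, and the pole of $W$ at $\tfrac12$ (a continuous-spectrum residual, Theorem~\ref{thm:Wsfgmero}) then sits at $s=\tfrac32$; the evaluation $(s+k-2)^{-1}\big|_{s=3/2}=(k-\tfrac12)^{-1}$ is precisely what turns the residue $\tfrac{(k-\frac12)}{4\pi^2}\tfrac{L(\frac32,f\times g)}{\zeta(3)}$ into the constant $C$. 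Your proposal never engages with this double integral, yet it is where the main term arises and where the meromorphic continuation and vertical-strip bounds for $D$ (as opposed to $W$) are actually obtained; Theorem~\ref{thm:Wsfgmero} by itself continues $W$, not $D$, so asserting the polar data of $D$ directly leaves the key computation (the paper's Lemma~\ref{lem:Wsfg_MellinBarnes}) unproved, and the step ``read the $s=\tfrac32$ residue off the diagonal'' would fail if executed literally.

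A second, smaller slip: there are no poles strictly between $s=\tfrac32$ and the line $\Re s=\tfrac12+\theta+\epsilon$, so the intermediate ``spectral residues'' you propose to collect do not occur on that shift. The apparent poles of $Z(s,0,f\times g)$ at $s=\tfrac12\pm it_j$ in fact vanish (Lemmas~\ref{lem:Litj_equals_zero} and~\ref{lem:oddorthogonaltoeven}); the genuine spectral barrier at $\Re s\le\tfrac12+\theta$ for $D$ comes from the shifted factor $W(s-1;f,g)$, whose discrete-spectrum poles at $s-1=-\tfrac12\pm it_j$ land at $\Re s\in[\tfrac12-\theta,\tfrac12+\theta]$, which is why the contour stops where it does. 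This part is repairable bookkeeping, but as written both the source of the main term and the source of the $\theta$-barrier are misidentified.
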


\begin{remark} Our main proof of Theorem~\ref{thm:second_moment} is for the case when $f$ and $g$ are level one. We expect the argument to completely generalize to higher level and nebentypus, and thus also to half-integral weight, but it is more difficult to explicitly verify the absence of poles of $D(s,S_f \times S_g)$ and $D(s,S_f \times \overline{S_g})$ in this case. We reserve the details concerning general level for Section~\ref{sec:genweight}.
\end{remark}

This can be thought of as a generalized, smoothed analogue of~\eqref{eq:OnAverageSquares}, as it includes a weight factor of $e^{-n/X}$ rather than a sharp cutoff function.
In particular, by specializing to eigenforms $f = g$, we recover a new proof of the Classical Conjecture on average. The disparity between the error terms, $O(X^{-\frac{1}{2}+\theta+\epsilon})$ in our theorem and $O(X^k\log^2(X))$, as cited above in \eqref{beebound}, is accounted for in the smoothing factor. Applying different Mellin transforms would allow us to obtain sharp cutoff results but by our current analysis techniques we are unable to match $O(X^k\log^2(X))$, which we still expect to be true even in the case when $f \neq g$.
Indeed, the bound on the error term $B(x)$ in \eqref{beebound} does not come from a pole of $D(s, S_f \times S_f)$, but rather from more subtle issues of convergence from using a sharp cutoff.

In Section~\ref{sec:genweight}, we show how our methods might extend to general level and demonstrate how they extend to square-free level explicitly.
Conversely, by comparing our methodology with the results of~\cite{HafnerIvic89} and~\cite{chandrasekharan1962functional}, we get Theorem~\ref{thm:general_weight_level_comparison}, which demonstrates surprising cancellation involving Kloosterman sums for general level.



\subsection*{Areas for Further Investigation}

By using either a sharp cutoff or a concentrating integral transform on the meromorphic continuation given in Theorems~\ref{thm:Wsfgmero} one could hope to extend the methods of this paper to get individual bounds on $S_f(n)^2$ instead of average bounds. For the first time, we have an analytic object with which to explore these second moments.

The greatest barrier to progress has been understanding the discrete spectrum~\eqref{line:1spectralexp}, which requires bounding
\begin{align*}
  \frac{1}{2\pi i} \int_{(\sigma)} \sum_j \rho_j(1)  &\frac{\Gamma(s - \frac{1}{2} + it_j) \Gamma(s - \frac{1}{2} - it_j)}{\Gamma(s) \Gamma(s + k - 1)}  L(s - \tfrac{1}{2}, \mu_j)  \\
&  \times  \langle \Im(\cdot)^k(f\overline{g}+T_{-1}(f\overline{g})), \mu_j \rangle V(X, s) \ ds
\end{align*}
where $V(X, s)$ is a cutoff function and $T_{-1}$ is the Hecke operator that sends $x$ in the argument of the function to $-x$.
Other choices of cutoff function should lead to more and different avenues for exploration. We suspect better understanding of bounds on terms coming from the discrete spectrum are possible with current technology, and we are eager to explore other approaches.

Our current ability to bound the discrete spectrum will be made more clear in forthcoming work, where we build on the meromorphic properties of $D(s, S_f \times S_f)$ and use the flexible cutoff function $V_Y(X,s) = \frac{X^2}{Y} \exp(\pi s^2/Y^2)$ to bound the size of $S_f(n)$ in short intervals of a particular size.

Further, bounds on the size of $\sum S_f(n)^2$ are closely related to sign changes of $S_f(n)$ and of the individual Fourier coefficients $a(n)$. In another forthcoming work, the authors investigate the sign changes in the sequence $\{S_f(n)\}$ and relations with the Classical Conjecture.

\section{Basic Tools and Notation}

In this section, we recall some basic tools and formulae.

\subsection{The Rankin-Selberg $L$-function}\label{ssec:rankin}

Here we follow the construction of $L(s,f \times g)$ given in Section 1.6 of~\cite{Bump98} but we normalize so that the functional equation corresponds to the transformation $s \to 1-s$. Let $f(z) = \sum a(n)e(nz)$ and $g(z) = \sum b(n)e(nz)$ be modular forms of weight $k$ on a congruence subgroup $\Gamma$, and where at least one is cuspidal. Let $\Gamma \backslash \mathcal{H}$ denote the upper half plane modulo the group action due to $\Gamma$ and let $\langle f,g \rangle$ denote the Petersson inner product,
\[
\langle f,g\rangle = \iint\limits_{\Gamma \backslash \mathcal{H}} f(z)\overline{g(z)} \, \frac{dxdy}{y^2}.
\]
 The Rankin-Selberg convolution $L$-function is given by the Dirichlet series expansion for $\Re s >1$,
\begin{align}\label{eq:rankinselberg}
  L(s, f\times g) &:= \zeta(2s)\sum_{n \geq 1} \frac{a(n)\overline{b(n)}}{n^{s + k - 1}} ,
  \end{align}
  which can be meromorphically continued to all $s \in \mathbb{C}$ via the identity,
  \begin{align}
  &  L(s, f\times g) = \frac{(4\pi)^{s+k-1}\zeta(2s)}{\Gamma(s + k - 1)}\left \langle \Im(\cdot)^k f\overline{g}, E(\cdot, \overline{s}) \right \rangle , \label{line:2rankinselberg}
\end{align}
where $E(z,s)$ is the real-analytic Eisenstein series
\begin{equation}\label{eq:eisenstein}
E(z,s)=  \sum_{\gamma \in \Gamma_\infty \backslash \Gamma} \Im (\gamma z)^s,
\end{equation}
for $\Re s >1$. We also note that in \eqref{line:2rankinselberg} if we replace $f\overline{g}$ with $fT_{-1}g$, where $T_{-1}$ is the Hecke operator which has action $T_{-1}F(x+iy) = F(-x+iy)$ as described in Theorem~3.12.6 of~\cite{Goldfeld2006automorphic}, we can similarly get a meromorphic continuation of $L(s,f\times \overline{g})$.
Here $\Gamma_\infty$ is the stabilizer subgroup of $\Gamma$ of the cusp at infinity. The Rankin-Selberg $L$-function is holomorphic except for, at most, a simple pole at $s =1$ when $f=g$ whose residues can be read from \eqref{line:2rankinselberg}.  When $\Gamma=\SL_2(\mathbb{Z})$, we have the functional equation
\[
  (2\pi)^{-2s }\Gamma(s)\Gamma(s + k - 1) L(s, f\times g) := \Lambda(s, f\times g) = \Lambda(1 - s, f\times g)
\]
due to the functional equation of the completed Eisenstein series $E^*(z,s)=E^*(z,1-s)$ where $E^*(z,s):=\pi^{-s}\Gamma(s)\zeta(2s)E(z,s)$. There is an analogous transformation at higher levels but its formulation is complicated by the existence of other cusps.

\subsection{Mellin-Barnes integral transform}\label{ssec:mellinbarnes}

Here we consider an integral analogue of the binomial theorem and one of the integrals considered by Barnes~\cite{Barnes}, slightly modified from the formulation presented in 6.422(3) of~\cite{GradshteynRyzhik07}.

\begin{lemma}[Barnes, 1908]
  If $0 > \gamma > -\Re ( \beta)$ and $\lvert \arg t \rvert < \pi$, then
  \[
    \frac{1}{2\pi i}\int_{\gamma - i\infty}^{\gamma + i\infty}\Gamma(-s)\Gamma(\beta + s)t^s \, ds = \Gamma(\beta)(1+t)^{-\beta}.
  \]
\end{lemma}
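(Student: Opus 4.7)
The plan is to prove the identity by contour integration, closing the line of integration to the right and summing the residues into the binomial series for $(1+t)^{-\beta}$. The integrand $\Gamma(-s)\Gamma(\beta+s)t^s$ has two families of simple poles: those of $\Gamma(-s)$ at $s=0,1,2,\ldots$, and those of $\Gamma(\beta+s)$ at $s=-\beta,-\beta-1,-\beta-2,\ldots$. The hypothesis $0 > \gamma > -\Re(\beta)$ is precisely what is needed so that the vertical line $\Re(s)=\gamma$ separates these two families.

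First I would restrict to $|t|<1$ (and $|\arg t|<\pi$) and close the contour with a large semicircle $C_N$ to the right, passing halfway between the integers $N$ and $N+1$ so as to avoid the poles. The key estimate is that the integral over $C_N$ tends to $0$ as $N\to\infty$. This follows from Stirling's formula: on $\Re(s)=N+\tfrac12$ one has
\[
|\Gamma(-s)\Gamma(\beta+s)| \ll \frac{|\Gamma(\beta+s)|}{|\Gamma(1+s)|}\cdot\frac{\pi}{|\sin\pi s|} \ll e^{-\pi|\Im s|}\, N^{\Re(\beta)-1},
\]
while $|t^s|=|t|^{\Re(s)}e^{-\arg(t)\Im(s)}$. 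The condition $|\arg t|<\pi$ ensures $e^{-\arg(t)\Im(s)}$ is dominated by the $e^{-\pi|\Im s|}$ coming from Stirling, and $|t|^{N+1/2}$ gives exponential decay as $N\to\infty$ since $|t|<1$. Hence the semicircular contribution vanishes.

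Next I would compute the residues. Since $\Gamma(-s)$ has residue $(-1)^{n+1}/n!$ at $s=n$, the residue of the integrand at $s=n$ is $\frac{(-1)^{n+1}}{n!}\Gamma(\beta+n)t^n$. Closing to the right traverses the poles clockwise, producing
\[
\frac{1}{2\pi i}\int_{\gamma-i\infty}^{\gamma+i\infty}\Gamma(-s)\Gamma(\beta+s)t^s\,ds \;=\; \sum_{n=0}^\infty \frac{(-1)^n}{n!}\Gamma(\beta+n)\,t^n.
\]
Using $\Gamma(\beta+n)/\Gamma(\beta) = \beta(\beta+1)\cdots(\beta+n-1) = (-1)^n\binom{-\beta}{n}n!$, the right-hand side equals $\Gamma(\beta)\sum_{n\ge 0}\binom{-\beta}{n}t^n = \Gamma(\beta)(1+t)^{-\beta}$, which is the desired formula.

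Finally I would remove the restriction $|t|<1$ by analytic continuation in $t$. Both sides are holomorphic functions of $t$ in the cut plane $|\arg t|<\pi$: the right-hand side manifestly so (using the principal branch), and the left-hand side by absolute convergence of the integral on $\Re(s)=\gamma$ together with the $|\arg t|<\pi$ hypothesis, which keeps $|t^s|=|t|^{\gamma}e^{-\arg(t)\Im(s)}$ under control against the exponential decay of $\Gamma(-s)\Gamma(\beta+s)$. The main obstacle is the contour-closing step — verifying uniformly in $N$ that the Stirling bound beats $|t^s|$ on the shifted vertical segments and on the horizontal caps — and this is exactly where the hypothesis $|\arg t|<\pi$ is used in an essential way.
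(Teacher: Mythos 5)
Your proof is correct and is the classical argument: the paper itself gives no proof of this lemma, citing Barnes (1908) and Gradshteyn--Ryzhik 6.422(3), and the argument in those sources is exactly your contour-shift --- close to the right for $\lvert t\rvert<1$, sum the residues of $\Gamma(-s)$ at $s=n$ into the binomial series $\Gamma(\beta)\sum_{n}\binom{-\beta}{n}t^{n}$, and extend to $\lvert \arg t\rvert<\pi$ by analytic continuation. The only cosmetic point is that you describe the closing contour first as a semicircle and later as vertical segments with horizontal caps; either choice works, provided (as you note) the $e^{-\pi\lvert\Im s\rvert}$ decay from Stirling and the separation of the two pole families by $\Re s=\gamma$ are used to control the portions of the contour where $\lvert t\rvert^{\Re s}$ gives no decay.
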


\subsection{Selberg spectral expansion}\label{ssec:spectraltheorem}

We review Selberg's Spectral Theorem, as presented in Theorem~15.5 of~\cite{IwaniecKowalski04}.

Let $L^2(\Gamma \backslash \mathcal{H})$ denote the space of square integrable functions with respect to the Petersson norm. Let  $f \in L^2(\Gamma \backslash \mathcal{H})$ and let $\{\mu_j(z) : j \geq 0\}$ be a complete orthonormal system for the residual and cuspidal spaces of $\Gamma \backslash \mathcal{H}$, consisting of the constant function $\mu_0(z)$ and infinitely many Maass cusp forms $\mu_j(z)$ for $j \geq 1$ with associated eigenvalues $\tfrac{1}{4} +t_j^2$ with respect to the hyperbolic Laplacian. We may assume that $\mu_j$ are also simultaneous eigenfunctions of the Hecke operators, including the $T_{-1}$ operator. Then $f(z)$ has the spectral decomposition,

\begin{equation}
  f(z) = \sum_j \langle f, \mu_j \rangle \mu_j(z) + \sum_{\mathfrak{a}} \frac{1}{4\pi}\int_\mathbb{R} \langle f, E_\mathfrak{a} (\cdot, \tfrac{1}{2} + it)\rangle E_\mathfrak{a}(z, \tfrac{1}{2} + it) \, dt,
\end{equation}
where $\mathfrak{a}$ ranges over the cusps of $\Gamma \backslash \mathcal{H}$. We casually refer to the first sum as the discrete and residual spectrum, and the sums of integrals as the continuous spectrum.

Further, these $it_j$ of the Maass cusp forms $\mu_j$ are expected to satisfy Selberg's Eigenvalue Conjecture, which says that all $t_j$ are real. It is known that $t_j$ is purely real or purely imaginary, and Selberg's Eigenvalue Conjecture has been proved for certain congruence subgroups, but not in general. We let $\theta = \sup_j\{|\Im(t_j)|\}$ denote the best known progress toward Selberg's Eigenvalue Conjecture for $\Gamma$. The current best known result for $\theta$ for all congruence subgroups is due to Kim and Sarnak, who show that $\theta \leq \frac{7}{64}$ as a consequence of a functoriality result due to Kim~\cite{KS}.

\section{Decomposition of Main Series}\label{sec:decomposition}

In this section, we decompose $D(s, S_f\times S_g)$ into smaller components that we can analyze individually.

\begin{proposition}\label{prop:decomposition}
  Let $f(z) = \sum_{n \geq 1}a(n)e(nz)$ and $g(z) = \sum_{n \geq 1}b(n)e(nz)$ be two weight $k$ cusp forms. Define $S_f(n) := \sum_{m \leq n}a(m)$ to be the partial sum of the first $n$ coefficients of $f$. Then the Dirichlet series associated to $S_f(n)\overline{S_g(n)}$ decomposes into
  \begin{align}
    D(&s, S_f \times S_g) := \sum_{n \geq 1} \frac{S_f(n)\overline{S_g(n)}}{n^{s+k-1}} \\
    &= W(s; f,g) + \frac{1}{2\pi i}\int_{(\gamma)}W(s-z; f,g) \zeta(z) \frac{\Gamma(z)\Gamma(s - z + k - 1)}{\Gamma(s + k - 1)} \ dz, \nonumber
  \end{align}
  for $1 < \gamma < \Re (s - 1)$. Here, $L(s, f\times g)$ denotes the Rankin-Selberg $L$-function as in Section~\ref{ssec:rankin}, $W(s; f,g)$ is given by
  \begin{equation}\label{eq:Wsfg}
  W(s; f,g) := \frac{L(s, f\times g)}{\zeta(2s)} + Z(s, 0, f\times g), \end{equation}
  and $Z(s, w, f\times g)$ denotes the symmetrized shifted convolution sum
  \begin{equation}
    Z(s, w, f\times g) := \sum_{n,h \geq 1} \frac{a(n)\overline{b(n-h)}+a(n-h)\overline{b(n)}}{n^{s + k - 1}h^w}.
  \end{equation}
\end{proposition}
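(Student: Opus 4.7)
The plan is to recognize $W(s;f,g)$ as the Dirichlet series generated by the first differences $T(n) := S_f(n)\overline{S_g(n)} - S_f(n-1)\overline{S_g(n-1)}$ (with the convention $S_f(0) = S_g(0) = 0$), and then recover $D(s, S_f \times S_g)$ from its differences via a Mellin--Barnes integral.

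First I would expand
\[
  T(n) = a(n)\overline{b(n)} + a(n)\overline{S_g(n-1)} + \overline{b(n)}S_f(n-1),
\]
rewrite the two mixed terms as $\sum_{h=1}^{n-1} a(n)\overline{b(n-h)}$ and $\sum_{h=1}^{n-1} a(n-h)\overline{b(n)}$, and sum against $n^{-(s+k-1)}$. This gives
\[
  \sum_{n\geq 1}\frac{T(n)}{n^{s+k-1}} = \sum_{n\geq 1}\frac{a(n)\overline{b(n)}}{n^{s+k-1}} + Z(s, 0, f\times g) = \frac{L(s, f\times g)}{\zeta(2s)} + Z(s, 0, f\times g) = W(s; f, g),
\]
by the Rankin--Selberg normalization of Section~\ref{ssec:rankin}, in the region where all sums converge absolutely.

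Next I would perform Abel summation: writing $S_f(n)\overline{S_g(n)} = \sum_{m\leq n} T(m)$ and swapping the order of summation,
\[
  D(s, S_f\times S_g) = \sum_{m\geq 1} T(m)\sum_{n\geq m}\frac{1}{n^{s+k-1}} = W(s;f,g) + \sum_{m\geq 1} T(m)\sum_{h\geq 1}\frac{1}{(m+h)^{s+k-1}},
\]
and then apply Barnes' integral (Lemma~2.1) to $(1 + h/m)^{-(s+k-1)}$ with $t = h/m$, $\beta = s+k-1$. Choosing a contour $(\gamma_0)$ with $-\Re(s+k-1) < \gamma_0 < -1$ makes the $h$-sum absolutely convergent and yields a factor $\zeta(-z)$; interchanging with the $m$-sum then reassembles $W(s+z; f, g)$. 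The substitution $z\mapsto -z$ flips the contour to $(\gamma)$ with $\gamma = -\gamma_0 > 1$, producing the stated identity. The remaining restriction $\gamma < \Re(s) - 1$ is exactly the constraint that keeps $W(s-z; f, g)$ in its half-plane of absolute convergence along the new contour.

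The main technical hurdle is the justification of the two interchanges of a sum with a contour integral on a vertical line. Both reduce to standard Fubini arguments, using Stirling's asymptotic to give exponential decay of the $\Gamma$-factors in $|\Im z|$, polynomial bounds for $\zeta(z)$ on vertical strips, and absolute convergence of $W(s+z;f,g)$ on the original line; the Deligne bound $a(n), b(n) \ll n^{(k-1)/2+\epsilon}$ is more than enough to control the arithmetic coefficients. The identity then extends to the full claimed region by analytic continuation.
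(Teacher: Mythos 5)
Your proposal is correct and is essentially the paper's own argument: your first-difference/Abel-summation bookkeeping reproduces exactly the paper's rearrangement of the double sum, with $\sum_n T(n)n^{-(s+k-1)} = W(s;f,g)$ playing the role of the paper's ``$m=0$'' diagonal-plus-shifted terms and the remaining sum $\sum_{m,h} T(m)(m+h)^{-(s+k-1)}$ handled by the same Mellin--Barnes step (the paper's $t=m/n$ is your $t=h/m$ with names swapped), the same contour flip to $(\gamma)$, and the same constraint $1<\gamma<\Re(s)-1$. The only remaining work is the routine Fubini/convergence justification for $\Re s$ large, which you already note and which matches the paper's treatment.
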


\begin{proof}
  Expand and recollect the partial sums $S_f$ and $S_g$.
  \begin{align}
    D(s, S_f \times S_g) &= \sum_{n \geq 1}\frac{S_f(n)\overline{S_g(n)}}{n^{s+k-1}} =\sum_{n=1}^\infty \frac{1}{n^{s+k-1}} \sum_{m=1}^n a(m)\sum_{h=1}^n \overline{b(h)}.
  \end{align}
  Separate the sums over $m$ and $h$ into the cases where $m = h, m > h$, and $m < h$.  We adopt the convention that $a(n) = 0$ for $n \leq 0$ to simplify notation.
  We also reorder the sums, summing down from $n$ instead of up to $n$. With somewhat abusive summation notation, this gives
  \begin{align}
    \sum_{n=1}^\infty \frac{1}{n^{s+k-1}} \left(\sum_{h=m\geq 0} + \sum_{h>m\geq 0}+\sum_{m >h \geq 0}\right)a(n-m)\overline{b(n-h)}.
  \end{align}
  In the first above sum, we take $h = m$. In the second sum, when $h > m$, we can let $h = m + \ell$ and then sum over $m$ and $\ell$. Similarly in the third sum, when $m > h$, we can let $m = h + \ell$. This yields
  \begin{align*}
    &=\sum_{n=1}^\infty \frac{1}{n^{s+k-1}} \bigg(\sum_{m \geq 0} a(n-m)\overline{b(n-m)} \\
    &\qquad+ \sum_{\substack{\ell\geq 1 \\ m\geq 0}} a(n-m)\overline{b(n-m-\ell)} + \sum_{\substack{\ell\geq 1 \\ m\geq 0}} a(n-m-\ell)\overline{b(n-m)}\bigg).
  \end{align*}

  The cases when $m = 0$ are distinguished. They contribute
  \[
    W(s; f,g) = \frac{L(s, f\times g)}{\zeta(2s)} + Z(s, 0, f\times g).
  \]
  So $W(s; f,g)$ is a separation into diagonal, above-diagonal, and below-diagonal components. After reindexing by changing $n \mapsto n + m$, the sums over $m \geq 1$ can be rewritten as
  \begin{align*}
    \sum_{n,m \geq 1} \frac{1}{(n + m)^{s + k - 1}} \bigg(a(n)\overline{b(n)} + \sum_{\ell \geq 1} a(n)\overline{b(n - \ell)} + \sum_{\ell \geq 1} a(n - \ell)\overline{b(n)}\bigg).
  \end{align*}

By using the Mellin-Barnes transform from Section~\ref{ssec:mellinbarnes} with $t = m/n$, we decouple $m$ from $n$. Restricting to $\gamma > 1$ and $\Re s$ sufficiently large, the $m$ sum can be collected into $\zeta(z)$ and the $n$ sum can be collected into $W(s;f,g)$. Simplification completes the proof.
\end{proof}

To understand $D(s, S_f \times S_g)$, we study the analytic behaviour of $L(s, f\times g)$ and $Z(s, w, f\times g)$.
We treat $W(s; f,g)$ as a single object and we show in Section~\ref{sec:Zswfg} that the pole of $L(s, f\times g)$ will exactly cancel the rightmost pole of $Z(s, 0, f\times g)$.

\section{Analytic behaviour of $W(s; f,g)$ and $Z(s, w, f\times g)$}\label{sec:Zswfg}

For now, let $f$ and $g$ be full-integral weight holomorphic cusp forms of level one.
We expect most of our methods will generalize to arbitrary level and to half-integral weight, and we try to present the material in a way that indicates how the general methodology works.
To this end, we continue to show dependence on progress towards Selberg's Eigenvalue Conjecture to indicate how the results generalize to congruence subgroups where the conjecture is not yet verified.
We return to more general level in Section~\ref{sec:genweight}.
In this section, we produce a spectral expansion for the symmetrized shifted double Dirichlet series
\[
  Z(s, w, f\times g):=\sum_{m \geq 1} \sum_{\ell \geq 1} \frac{a(m)\overline{b(m-\ell)}+a(m-\ell)\overline{b(m)}}{m^{s+k-1}\ell^w},
\]
and use it to understand the analytic behaviour of $W(s; f,g)$.

\subsection{Spectral Expansion}\label{ssec:spectral}

For an integer $h \geq 1$, define the weight zero Poincar\'{e} series on $\Gamma$,
\[
  P_h(z,s):=\sum_{\gamma \in\Gamma_\infty \backslash \Gamma} \Im(\gamma z)^s e\left(h \gamma z\right),
\]
defined initially for $\Re(s)$ sufficiently positive and with meromorphic continuation to all $s\in\mathbb{C}$.

Let $\mathcal{V}_{f,g}(z) :=y^k (f \overline{g}+T_{-1}(f\overline{g}))$, which is a function in $L^2(\Gamma \backslash \mathbb{H})$. By expanding the Petersson inner product below we get
\begin{align*}
  \langle \mathcal{V}_{f,g} ,P_h(\cdot, \overline{s}) \rangle &=\frac{\Gamma\left({s}+k -1\right)}{\left(4\pi \right)^{{s}+ k -1}} D_{f,g}({s};h),
\end{align*}
where we mirror the notation in~\cite{HoffsteinHulse13} and define
\[
  D_{f,g}(s;h) := \sum_{n \geq 1}\frac{a(n)\overline{b(n-h)}+a(n-h)\overline{b(n)}}{n^{s+k-1}},
\]
again for $\Re(s)$ sufficiently positive.
Dividing by $h^w$ and summing over $h \geq 1$ recovers $Z(s, w, f\times g)$,
\begin{equation}\label{LHS}
  Z(s, w, f\times g) := \sum_{n, h \geq 1} \frac{D_{f, g}(s;h)}{h^w} = \frac{(4\pi)^{s+k-1}}{\Gamma(s+k-1)} \sum_{h \geq 1} \frac{\langle \mathcal{V}_{f,g}, P_h \rangle}{h^w},
\end{equation}
when both $\Re(s)$ and $\Re(w)$ are sufficiently positive.

We will obtain a meromorphic continuation of $Z(s, w, f\times g)$ by using the spectral expansion of the Poincar\'e series and substituting it into \eqref{LHS}. Let $\{\mu_j\}$ be an orthonormal basis of Maass eigenforms with associated types $\frac{1}{2} + it_j$ for $L^2(\Gamma \backslash \mathcal{H})$ as in Section~\ref{ssec:spectraltheorem}, each with Fourier expansion
\[
  \mu_j(z)=\sum_{n \neq 0} \rho_j(n)y^{\frac{1}{2}}K_{it_j}(2\pi \vert n \vert y)e^{2\pi i n x}.
\]
Then the inner product of $\mu_j$ against the Poincar\'{e} series gives
\begin{align}\label{eq:mujP}
  \langle P_h(\cdot,s),\mu_j \rangle &= \frac{\overline{\rho_j(h)}\sqrt{\pi}}{(4\pi h)^{s-\frac{1}{2}}} \frac{\Gamma(s-\frac{1}{2}+it_j)\Gamma(s-\frac{1}{2}-it_j)}{\Gamma(s)}.
\end{align}
\begin{remark}
  In the computation of this inner product and the inner product of the Eisenstein series against the Poincar\'e series, we use formula~\cite[$\S$6.621(3)]{GradshteynRyzhik07} to evaluate the final integrals.
\end{remark}

%

Let $E(z,w)$ be the Eisenstein series on $\SL_2(\mathbb{Z})$ as in \eqref{eq:eisenstein}. Then $E(z,w)$ has Fourier expansion (as in~\cite[Chapter 3]{Goldfeld2006automorphic}) given by
\begin{align}
  E(z,w)& =y^w + \phi(w)y^{1-w}  \\
 &\quad +  \frac{2\pi^w \sqrt{y} }{\Gamma(w)\zeta(2w)}\sum_{m \neq 0} \vert m \vert^{w-\frac{1}{2}}\sigma_{1-2w}(\vert m \vert) K_{w-\frac{1}{2}}(2\pi \vert m \vert y)e^{2\pi i m x}, \notag
\end{align}
where
\[
  \phi(w) = \sqrt{\pi} \frac{\Gamma(w - \tfrac{1}{2})\zeta(2w - 1)}{\Gamma(w)\zeta(2w)}.
\]
The inner product of the Poincar\'{e} series ($h \geq 1$) against the Eisenstein series $E(z,w)$ is given by
\begin{align}\label{eq:PhE}
  \left\langle P_h(\cdot,s),E(\cdot,w)\right\rangle =\frac{2\pi^{\overline{w}+\frac{1}{2}} h^{\overline{w}-\frac{1}{2}}\sigma_{1-2\overline{w}}(h)}{\zeta(2\overline{w})(4\pi h)^{s-\frac{1}{2}}}\frac{\Gamma(s+\overline{w}-1)\Gamma(s-\overline{w})}{\Gamma(\overline{w})\Gamma(s)},
\end{align}
provided that $\mathrm{Re}\,s >\frac{1}{2}+\vert \mathrm{Re}\,w-\frac{1}{2}\vert$. For $t$ real, $w=\frac{1}{2}+it$, and $\mathrm{Re}\,s>\frac{1}{2}$, \eqref{eq:PhE} specializes to
\begin{equation}\label{eq:PhEspecialized}
  \langle P_h(\cdot,s),E(\cdot,\tfrac{1}{2}+it)\rangle=\frac{2\sqrt{\pi} \sigma_{2it}(h)}{\Gamma(s)(4\pi h)^{s-\frac{1}{2}}}\frac{\Gamma(s-\frac{1}{2}+it)\Gamma(s-\frac{1}{2}-it)}{h^{it}\zeta^*(1-2it)},
\end{equation}
in which $\zeta^*(2s):=\pi^{-s}\Gamma(s)\zeta(2s)$ denotes the completed zeta function.

The spectral expansion of the Poincar\'{e} series is given by
\begin{align}\label{eq:Pspectral}
  \begin{split}
    P_h(z,s)&=\sum_j \langle P_h(\cdot,s),\mu_j \rangle \mu_j(z) \\
            &\quad + \frac{1}{4\pi}\int_{-\infty}^\infty\langle P_h(\cdot,s),E(\cdot,\tfrac{1}{2}+it)\rangle E(z,\tfrac{1}{2}+it)\,dt.
  \end{split}
\end{align}
We shall refer to the above sum and integral as the discrete and continuous spectrum, respectively. After substituting \eqref{eq:mujP} into the discrete part of \eqref{eq:Pspectral}, the discrete spectrum takes the form
\[
  \frac{\sqrt{\pi}}{(4\pi h)^{s-\frac{1}{2}}\Gamma(s)}\sum_j \overline{\rho_j(h)} \Gamma(s-\tfrac{1}{2}+it_j)\Gamma(s-\tfrac{1}{2}-it_j) \mu_j(z)
\]
and is analytic in $s$ in the right half-plane $\mathrm{Re}\, s> \frac{1}{2}+\theta$, where $\theta = \sup_j\{\Im(t_j)\} \leq \frac{7}{64}$ is the best known progress toward Selberg's Eigenvalue Conjecture. After inserting \eqref{eq:PhEspecialized}, the continuous spectrum takes the form
\begin{align*}
  \frac{\sqrt{\pi}}{2\pi(4\pi h)^{s-\frac{1}{2}}}\int_{-\infty}^\infty \frac{\sigma_{2it}(h)}{h^{it}}\frac{\Gamma(s-\frac{1}{2}+it)\Gamma(s-\frac{1}{2}-it)}{\zeta^*(1-2it)\Gamma(s)}E(z,\tfrac{1}{2}+it)\,dt,
\end{align*}
which has its right-most poles in $s$ when $\mathrm{Re}\,s=\frac{1}{2}$.

Substituting this spectral expansion into~\eqref{LHS} and executing the sum over $h \geq 1$ gives the following proposition.

\begin{proposition}\label{prop:spectralexpansionfull}
  For $f,g$ weight $k$ forms on $\SL_2(\mathbb{Z})$, the shifted convolution sum $Z(s, w, f\times g)$ can be expressed as
\begin{align}
  Z(s&, w, f\times g) := \sum_{m=1}^\infty \frac{a(m) \overline{b(m-h)}+a(m-h)\overline{b(m)} }{m^{s+k -1}h^w} \nonumber \\
     &= \frac{(4\pi )^k}{2} \sum_j\rho_j(1) G(s, i t_j) L(s + w -\tfrac{1}{2},\mu_j)\langle \mathcal{V}_{f,g},\mu_j \rangle \label{line:1spectralexp} \\
     &\quad+\frac{(4\pi)^{k}}{4\pi i}\int_{(0)} G(s, z) \mathcal{Z}(s,w,z) \langle \mathcal{V}_{f,g},E(\cdot,\tfrac{1}{2}-\overline{z})\rangle \,dz, \label{line:2spectralexp}
\end{align}
when $\Re (s+w)>\frac{3}{2}$, where $G(s, z)$ and $\mathcal{Z} (s,w,z)$ are the collected $\Gamma$ and $\zeta$ factors of the discrete and continuous spectra,
\begin{align*}
  G(s, z) &= \frac{\Gamma(s - \tfrac{1}{2} + z)\Gamma(s - \tfrac{1}{2} - z)}{\Gamma(s)\Gamma(s+k-1)} \\
  \mathcal{Z}(s,w,z) &= \frac{\zeta(s + w -\frac{1}{2} + z)\zeta(s + w -\frac{1}{2} - z)}{\zeta^*(1+2z)}.
\end{align*}
\end{proposition}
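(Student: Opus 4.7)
\bigskip

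My plan is to insert the spectral expansion of the Poincar\'{e} series \eqref{eq:Pspectral} directly into the identity \eqref{LHS}, then interchange the sum over $h$ with the discrete sum and continuous integral, and evaluate the resulting $h$-sums. The starting point is
\[
  Z(s, w, f\times g) = \frac{(4\pi)^{s+k-1}}{\Gamma(s+k-1)} \sum_{h \geq 1} \frac{\langle \mathcal{V}_{f,g}, P_h(\cdot,\overline{s}) \rangle}{h^w},
\]
valid for $\Re(s)$, $\Re(w)$ sufficiently large. Using Parseval against the complete orthonormal system $\{\mu_j\}$ together with the Eisenstein continuous spectrum yields
\[
  \langle \mathcal{V}_{f,g}, P_h(\cdot,\overline{s}) \rangle
  = \sum_j \overline{\langle P_h(\cdot,\overline{s}), \mu_j \rangle}\, \langle \mathcal{V}_{f,g}, \mu_j \rangle
  + \frac{1}{4\pi}\int_{-\infty}^{\infty} \overline{\langle P_h(\cdot,\overline{s}), E(\cdot, \tfrac{1}{2}+it)\rangle}\, \langle \mathcal{V}_{f,g}, E(\cdot, \tfrac{1}{2}+it)\rangle\, dt.
\]
The explicit formulas \eqref{eq:mujP} and \eqref{eq:PhEspecialized} give the $h$-dependence of these inner products in closed form.

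\smallskip

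For the discrete part, I would use that each $\mu_j$ is a Hecke eigenform to write $\rho_j(h) = \rho_j(1)\lambda_j(h)$, so that after moving the sum over $h$ inside,
\[
  \sum_{h \geq 1} \frac{\rho_j(h)}{h^{w + s - \frac{1}{2}}} = \rho_j(1)\, L(s + w - \tfrac{1}{2}, \mu_j),
\]
which converges absolutely for $\Re(s+w) > 3/2$ (using convexity bounds on $\rho_j(h)$). Combining with the gamma factors from \eqref{eq:mujP} and the prefactor $(4\pi)^{s+k-1}/\Gamma(s+k-1)$ produces exactly the main discrete sum in \eqref{line:1spectralexp}, with $G(s,z)$ as the collected gamma ratio and an overall factor of $(4\pi)^k/2$ after absorbing the $h$- and $s$-independent constants.

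\smallskip

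For the continuous part, the analogous computation uses the Ramanujan divisor identity
\[
  \sum_{h \geq 1} \frac{\sigma_{2it}(h)}{h^{s+w-\frac{1}{2}+it}} = \zeta(s + w - \tfrac{1}{2} + it)\,\zeta(s + w - \tfrac{1}{2} - it),
\]
again valid for $\Re(s+w) > 3/2$. The factor $\zeta^*(1-2it)$ in the denominator of \eqref{eq:PhEspecialized} combines with the gamma factor so that after substituting $z = it$ and changing contour to $\Re(z) = 0$, the integrand takes the shape $G(s,z)\,\mathcal{Z}(s,w,z)\,\langle \mathcal{V}_{f,g}, E(\cdot, \tfrac{1}{2} - \overline{z})\rangle$, matching \eqref{line:2spectralexp}.

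\smallskip

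The main technical obstacle is justifying the interchange of the sum over $h$ with the spectral decomposition, together with absolute convergence of the resulting Maass sum and Eisenstein integral. In the half-plane $\Re(s+w) > 3/2$, the $h$-sums converge absolutely, so it suffices to show the spectral side is absolutely convergent there; this follows from standard Rankin--Selberg bounds on $\langle \mathcal{V}_{f,g}, \mu_j\rangle$, spectral large sieve estimates for $\rho_j(1)$, and the polynomial growth of the gamma ratio $G(s, it_j)$ on vertical strips, balanced against exponential decay in $t_j$. A brief verification that $\mathcal{V}_{f,g} \in L^2(\Gamma\backslash\mathcal{H})$ (which is clear since $f,g$ are cusp forms) completes the setup, and the resulting identity is the claimed formula.
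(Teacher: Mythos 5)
Your proposal is correct and follows essentially the same route as the paper: substitute the spectral expansion of the Poincar\'e series into the identity for $Z(s,w,f\times g)$, exchange the $h$-sum with the spectral sum and integral, and execute the $h$-sums via Hecke multiplicativity (giving $\rho_j(1)L(s+w-\tfrac12,\mu_j)$) and the Ramanujan identity for $\sigma_{2it}(h)$ (giving the product of zeta functions over $\zeta^*$). The paper is in fact terser than your write-up, relegating the convergence justification to the remark following the proposition, so your attention to the interchange and absolute convergence is a welcome elaboration rather than a deviation.
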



\begin{remark}\label{extraremark}
Consider Stirling's approximation, that for $x,y\in\mathbb{R}$,
\[
\Gamma(x+iy) \sim (1+|y|)^{x-\frac{1}{2}}e^{-\frac{\pi}{2}|y|}
\]
as $y \to \pm\infty$ with $x$ bounded. Thus, we have that for vertical strips in $s$ and $z$,
\[
G(s,z) \sim P(s,z) e^{-\frac{\pi}{2}(2\max(|s|,|z|)-2|s|)},
\]
where $P(s,z)$ has at most polynomial growth in $s$ and $z$. By Watson's triple product formula in the integral-weight case, given in Theorem 3 of~\cite{watson2008rankin}, and K\i{}ral's bound should we require  the half-integral weight case, given in Proposition 13 of \cite{mehmet}, we know that
\[
\rho_j(1)\langle f\overline{g} \Im(\cdot)^k,\overline{\mu_j}\rangle \ \ \mbox{ and } \ \ \rho_j(1)\langle T_{-1}(f\overline{g}) \Im(\cdot)^k,\overline{\mu_j}\rangle
\] has at most polynomial growth in $|t_j|$. The same can be said about
\[
\langle \mathcal{V}_{f,g},E(\cdot,\tfrac{1}{2}+z)\rangle / \zeta^*(1+2z),
\] albeit through more direct computation. From this it is clear that \eqref{line:1spectralexp} and \eqref{line:2spectralexp} converge uniformly on vertical strips in $t_j$ and have at most polynomial growth in $s$.
\end{remark}

\subsection{Meromorphic Continuation}\label{ssec:meromorphic_continuation}

In this section, we seek to understand the meromorphic continuation and polar behaviour of $Z(s, 0, f\times g)$. This naturally breaks down into two parts: the contribution from the discrete spectrum and the contribution from the continuous spectrum.

\subsubsection{The Discrete Spectrum.}

Examination of line~\eqref{line:1spectralexp}, the contribution from the discrete spectrum, reveals that the poles come only from $G(s, it_j)$.
There are apparent poles when $s = \tfrac{1}{2} \pm it_j - n$ for $n \in \mathbb{Z}_{\geq 0}$.
Interestingly, the first set of apparent poles are at $s = \frac{1}{2} \pm it_j$ do not actually occur.

\begin{lemma}\label{lem:Litj_equals_zero}
  For even Maass forms $\mu_j$, we have $L(\pm it_j, \mu_j) = 0$.
\end{lemma}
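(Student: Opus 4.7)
The plan is to deduce the vanishing from the functional equation of the standard $L$-function attached to the even Maass cusp form $\mu_j$. In the normalization implicit in Proposition~\ref{prop:spectralexpansionfull}, one has
\[
L(s, \mu_j) = \sum_{n \geq 1} \frac{\lambda_j(n)}{n^s},
\]
where $\lambda_j(n)$ are the Hecke eigenvalues with $\rho_j(n) = \rho_j(1)\lambda_j(n)$; the critical line is $\Re s = \tfrac{1}{2}$. First I would verify that this is indeed the Dirichlet series that emerges after executing the sum over $h$ in \eqref{LHS} against the discrete inner product \eqref{eq:mujP}, so that the lemma is about the standard $L$-function.

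Next I would invoke the classical completion: for an \emph{even} Maass cusp form on $\SL_2(\mathbb{Z})$ of type $\tfrac{1}{2}+it_j$,
\[
\Lambda(s, \mu_j) = \pi^{-s}\,\Gamma\!\left(\tfrac{s+it_j}{2}\right)\Gamma\!\left(\tfrac{s-it_j}{2}\right) L(s, \mu_j)
\]
is entire (because $\mu_j$ is cuspidal) and satisfies $\Lambda(s, \mu_j) = \Lambda(1-s, \mu_j)$. The essential observation is then that $\Gamma(\tfrac{s-it_j}{2})$ has a simple pole at $s = it_j$ and $\Gamma(\tfrac{s+it_j}{2})$ has a simple pole at $s = -it_j$. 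Entirety of $\Lambda(\cdot,\mu_j)$ forces $L(s, \mu_j)$ to vanish at both points, giving $L(\pm it_j, \mu_j) = 0$. This is exactly the pair of "trivial zeros" at the boundary of the critical strip that exists only for the even family; the parity hypothesis is essential because for odd Maass forms the archimedean Gamma factors are shifted to $\Gamma(\tfrac{s+1\pm it_j}{2})$ and are regular at $s = \pm it_j$.

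I do not anticipate a genuine obstacle: the argument is essentially bookkeeping once the correct completion is identified. The one point requiring modest care is confirming the normalization, which one can do either by direct comparison with \eqref{eq:mujP} or, alternatively, by writing $\Lambda(s,\mu_j)$ as the Mellin transform
\[
\int_0^\infty \mu_j(iy)\, y^{s-1}\, dy
\]
(up to constants), which converges absolutely for all $s$ by the rapid decay of $\mu_j$ at the cusp and, via $y \mapsto 1/y$ and the even symmetry $\mu_j(-\overline{z}) = \mu_j(z)$, yields both the functional equation and entirety self-contained. With that in hand, the lemma follows immediately from cancellation of the Gamma poles.
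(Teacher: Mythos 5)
Your argument is correct and is essentially the paper's own proof: both invoke the completed $L$-function $\Lambda(s,\mu_j)=\pi^{-s}\Gamma\!\left(\tfrac{s+it_j}{2}\right)\Gamma\!\left(\tfrac{s-it_j}{2}\right)L(s,\mu_j)$ for even $\mu_j$, note that it is entire with functional equation $s\mapsto 1-s$, and conclude that the poles of the archimedean Gamma factors at $s=\pm it_j$ force trivial zeros of $L(s,\mu_j)$ there. The extra remarks on normalization and the Mellin-transform derivation of the completion are fine but not needed beyond what the paper cites.
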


\begin{proof}
  The completed $L$-function associated to a Maass form $\mu_j$ is given by
  \begin{equation} \label{eq:feq}
    \Lambda_j(s) = \pi^{-s} \Gamma\left( \tfrac{s + \epsilon + it_j}{2} \right)\Gamma\left( \tfrac{s + \epsilon - it_j}{2} \right) L(s, \mu_j) = (-1)^\epsilon \Lambda_j(1-s),
  \end{equation}
  as in~\cite[Sec 3.13]{Goldfeld2006automorphic}, where $\epsilon = 0$ if the Maass form $\mu_j$ is even and $1$ if it is odd. As the completed $L$-function is entire, $L(\pm it_j, \mu_j)$ is a trivial zero.
\end{proof}

Similarly, $L(-2n \pm it_j, \mu_j), n \! \in \!\mathbb{Z}_{\geq 0}$ are trivial zeroes for even Maass forms.

\begin{lemma}\label{lem:oddorthogonaltoeven}
  Suppose $f$ and $g$ are weight $k$ cusp forms, as above. For odd Maass forms $\mu_j$, we have $\langle \mathcal{V}_{f,g}, \mu_j \rangle = 0$.
\end{lemma}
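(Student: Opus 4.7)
The proof will be a short symmetry/parity argument. The function $\mathcal{V}_{f,g}=y^k(f\overline{g}+T_{-1}(f\overline{g}))$ is, by construction, the symmetrization of $y^k f\overline{g}$ under $T_{-1}$, so $T_{-1}\mathcal{V}_{f,g}=\mathcal{V}_{f,g}$ because $T_{-1}^2=\mathrm{id}$. On the other hand, an odd Maass cusp form satisfies $T_{-1}\mu_j=-\mu_j$ by definition (equivalently, its Fourier coefficients satisfy $\rho_j(-n)=-\rho_j(n)$). So the strategy is to push $T_{-1}$ across the Petersson inner product: one gets $\langle \mathcal{V}_{f,g},\mu_j\rangle=\langle T_{-1}\mathcal{V}_{f,g},\mu_j\rangle=\langle \mathcal{V}_{f,g},T_{-1}\mu_j\rangle=-\langle \mathcal{V}_{f,g},\mu_j\rangle$, forcing the inner product to vanish.

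The one thing that needs justifying is the middle equality, namely that $T_{-1}$ is self-adjoint on $L^2(\Gamma\backslash\mathcal{H})$. I would verify this directly: $T_{-1}$ is induced by the involution $z\mapsto -\overline{z}$ on $\mathcal{H}$, which preserves the hyperbolic measure $y^{-2}\,dx\,dy$ and maps the standard fundamental domain of $\SL_2(\mathbb{Z})$ to itself (up to a measure-zero boundary identification). Making the substitution $x\mapsto -x$ in
\[
\langle T_{-1}F,G\rangle=\iint_{\Gamma\backslash\mathcal{H}}F(-x+iy)\overline{G(x+iy)}\,\frac{dx\,dy}{y^2}
\]
immediately yields $\langle F,T_{-1}G\rangle$, so $T_{-1}$ is self-adjoint (in fact unitary, since it squares to the identity).

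Everything else is just checking that $T_{-1}\mathcal{V}_{f,g}=\mathcal{V}_{f,g}$ (trivial from the definition $\mathcal{V}_{f,g}=y^k f\overline{g}+T_{-1}(y^k f\overline{g})$, noting that the $y^k$ factor is fixed by $T_{-1}$) and citing the basis convention made in Section~\ref{ssec:spectraltheorem} that each $\mu_j$ is a simultaneous eigenfunction of $T_{-1}$, with eigenvalue $-1$ precisely when $\mu_j$ is odd.

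I do not expect any real obstacle here. The only thing one might worry about is the well-definedness of applying $T_{-1}$ on $\Gamma\backslash\mathcal{H}$, but this is classical for $\Gamma=\SL_2(\mathbb{Z})$ (and more generally for any $\Gamma$ normalized by $\mathrm{diag}(-1,1)$), so the argument is essentially immediate once the three ingredients—$T_{-1}$-invariance of $\mathcal{V}_{f,g}$, $(-1)$-eigenvalue of odd $\mu_j$ under $T_{-1}$, and self-adjointness of $T_{-1}$—are in place.
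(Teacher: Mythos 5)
Your argument is exactly the paper's proof: $T_{-1}$-invariance of $\mathcal{V}_{f,g}$, self-adjointness of $T_{-1}$ with respect to the Petersson inner product, and the $(-1)$-eigenvalue of odd $\mu_j$, giving $\langle \mathcal{V}_{f,g},\mu_j\rangle = -\langle \mathcal{V}_{f,g},\mu_j\rangle = 0$. The only difference is that you spell out the change of variables $x\mapsto -x$ justifying self-adjointness, which the paper simply asserts; this is a correct and harmless addition.
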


\begin{proof}
  Recall that $\mathcal{V}_{f,g} :=y^k(f\overline{g}+T_{-1}(f\overline{g}))$, so clearly $T_{-1}\mathcal{V}_{f,g}=\mathcal{V}_{f,g}$. Since $T_{-1}$ is a self-adjoint operator with respect to the Petersson inner product we have that
  \[
  \langle \mathcal{V}_{f,g},\mu_j \rangle =  \langle T_{-1}\mathcal{V}_{f,g},\mu_j \rangle  =   \langle \mathcal{V}_{f,g},T_{-1}\mu_j \rangle = -  \langle \mathcal{V}_{f,g},\mu_j \rangle.
  \]
  Thus $  \langle \mathcal{V}_{f,g},\mu_j \rangle  =0$.
\end{proof}

Lemmas~\ref{lem:oddorthogonaltoeven} guarantees that the only Maass forms appearing in line~\eqref{line:1spectralexp} are even. The first set of apparent poles from even Maass forms appear at $s = \frac{1}{2} \pm it_j$ and occur as simple poles of the gamma functions in the numerator of $G(s, t_j)$. They come multiplied by the value of $L(it_j, \mu_j)$, which by Lemma~\ref{lem:Litj_equals_zero} is zero.

%

So, in summary, $D(s, S_f \times S_g)$ has no poles at $s = \frac{1}{2} \pm it_j$. The next set of apparent poles are at $s = -\frac{1}{2} \pm it_j$, appearing at the next set of simple poles of the gamma functions in the numerator. Unlike the previous poles, these do not occur at trivial zeroes of the $L$-function. So we have poles of the discrete spectrum at $s = -\frac{1}{2} \pm it_j$.


\subsubsection{The Continuous Spectrum.}\label{conspec}

Let us now examine line~\eqref{line:2spectralexp}, the contribution from the continuous spectrum. This is substantially more involved than the discrete spectrum and exhibits remarkable cancellation.

The rightmost pole seems to occur from the pair of zeta functions in the numerator, occurring when $s + w - \frac{1}{2} \pm z = 1$. We must disentangle $s$ and $w$ from $z$ in order to understand these poles.

Line~\eqref{line:2spectralexp} is analytic for $\Re (s+w) > \frac{3}{2}$. For $s$ with $\Re s \in (\frac{3}{2} - \Re w, \frac{3}{2} - \Re w + \epsilon)$ for some very small $\epsilon$, we want to shift the contour of integration, avoiding poles coming from the $\zeta^*(1 -2z)$ appearing in the denominator of the expansion of $E(\cdot,\frac{1}{2}+\overline{z})$. So we shift the $z$-contour to the right while staying within the zero-free region of $\zeta$. By an abuse of notation, we denote this shift here by $\Re z = \epsilon$ and let $\epsilon$ in this context actually refer to the real value of the $z$-contour at the relevant imaginary value. This argument can be made completely rigourous, cf.~\cite[p. 481-483]{HoffsteinHulse13}. By the residue theorem,
\begin{align}\label{eq:FEfirstpart}
  &\frac{(4\pi)^k}{4\pi i} \int_{(0)}G(s,z) \mathcal{Z}(s,w,z) \langle \mathcal{V}_{f,g}, E\rangle  \ dz \\
  &= \frac{(4\pi)^k}{4\pi i}\int_{(\epsilon)} G \mathcal{Z} \langle \mathcal{V}_{f,g}, E \rangle \  dz - \frac{(4\pi)^k}{2}\Res_{z = s + w - \frac{3}{2}} G \mathcal{Z} \langle \mathcal{V}_{f,g}, E\rangle \nonumber,
\end{align}
where the above residue is found to be
\begin{equation}\label{eq:FEfirstresidue}
 -\frac{\zeta(2s + 2w - 2)\Gamma(2s + w - 2)\Gamma(1-w)}{\zeta^*(2s+2w-2)\Gamma(s)\Gamma(s + k - 1)}\langle \mathcal{V}_{f,g}, E(\cdot, 2-\overline{s}-\overline{w})\rangle.
\end{equation}
The residue is analytic in $s$ for $\Re s \in (1 - \Re w, \tfrac{3}{2} - \Re w + \epsilon)$, and has an easily understood meromorphic continuation to the whole plane. Notice also that the shifted contour integral has no poles in $s$ for $\Re s \in (\tfrac{3}{2} - \Re w - \epsilon, \tfrac{3}{2} - \Re w + \epsilon)$, so we have found an analytic (not meromorphic!) continuation in $s$ of Line~\eqref{line:2spectralexp} past the first apparent pole along $\Re s = \frac{3}{2} - \Re w$.

For $s$ with $\Re s \in (\frac{3}{2} - \Re w - \epsilon, \frac{3}{2} - \Re w)$, we shift the contour of integration back to $\Re z = 0$.
Since this passes a pole, we pick up a residue.
But notice that this is the residue at the \emph{other} pole,
\begin{align}\label{eq:FEsecondpart}
  &\frac{(4\pi)^k}{4\pi i} \int_{(\epsilon)}G(s, w, z) \mathcal{Z}(s,w,z) \langle \mathcal{V}_{f,g}, E\rangle dz  \\
  =&\frac{(4\pi)^k}{4\pi i}\int_{(0)} G \mathcal{Z} \langle \mathcal{V}_{f,g}, E \rangle dz + \frac{(4\pi)^k}{2}\Res_{z = \frac{3}{2} - s - w} G \mathcal{Z}  \langle \mathcal{V}_{f,g}, E\rangle \nonumber.
\end{align}
By using the functional equations of the Eisenstein series and zeta functions, one can check that
\[
  \Res_{z = \frac{3}{2} - s - w} G \mathcal{Z} \langle \mathcal{V}_{f,g}, E\rangle = - \Res_{z = s + w -  \frac{3}{2}} G \mathcal{Z} \langle\mathcal{V}_{f,g}, E\rangle.
\]
So \eqref{line:2spectralexp}, originally defined for $\Re s > \frac{3}{2} - \Re w$, has meromorphic continuation for $\frac{1}{2} - \Re w < \Re s < \frac{3}{2} - \Re w$ given by
\begin{equation}
  \frac{(4\pi)^k}{4\pi i}\int_{(0)}G \mathcal{Z} \langle \mathcal{V}_{f,g}, E\rangle dz + (4\pi)^k \!\!\!\!\Res_{z = \frac{3}{2} - s - w} \!\!\!\! G \mathcal{Z} \langle \mathcal{V}_{f,g}, E\rangle.
\end{equation}

A very similar argument works to extend the meromorphic continuation in $s$ of the contour integral past the next apparent pole at $\Re s  = \frac{1}{2}$, leading to a meromorphic continuation in the region $-\frac{1}{2}< \Re s < \frac{1}{2}$ given by
\begin{align}\label{eq:fullcontinuation}
  &\frac{(4\pi)^k}{4\pi i}\int_{(0)}G(s, w, z)\mathcal{Z}(s,w,z)\langle \mathcal{V}_{f,g}, E(\cdot, \tfrac{1}{2} - \overline{z})\rangle dz \\
  &\quad+ (4\pi)^k \!\!\!\!\Res_{z = \frac{3}{2} - s - w} \!\!\!\! G(s, w, z) \mathcal{Z}(s,w,z) \langle \mathcal{V}_{f,g}, E(\cdot, \tfrac{1}{2} - \overline{z})\rangle \label{line:firstresidual} \\
  &\quad+ (4\pi)^k \!\!\Res_{z = \frac{1}{2} - s} \!\! G(s, w, z) \mathcal{Z}(s,w,z) \langle \mathcal{V}_{f,g}, E(\cdot, \tfrac{1}{2} -\overline{z})\rangle. \label{line:secondresidual}
\end{align}

We iterate this argument, as in Section~4 of~\cite[p. 481-483]{HoffsteinHulse13}. Somewhat more specifically, when $\Re(s)$ approaches a negative half-integer, $\frac{1}{2}-n$, we can shift the line of integration for $z$ right past the pole due to $G(s,z)$ at $z=s-\frac{1}{2}+n$, move $s$ left past the line $\Re(s)=\frac{1}{2}-n$ and then shift the line of integration for $z$ left, back to zero and over the pole at $z=\frac{1}{2}-s-n$. This gives meromorphic continuation of~\eqref{line:2spectralexp} arbitrarily far to the left, accumulating a pair of residual terms at each half-integer line as is the case in \eqref{line:secondresidual}.

We now specialize to $w = 0$. The rightmost pole of~\eqref{line:2spectralexp} occurs in the first residual term appearing in~\eqref{line:firstresidual} from the meromorphic continuation. The pole occurs at $s = 1$ from the Eisenstein series and has residue
\begin{align}\label{eq:Zresidue}
  &\Res_{s = 1} \Res_{z = s - \frac{3}{2}} (4\pi)^k G(s, 0, z) \mathcal{Z}(s,0,z) \langle \mathcal{V}_{f,g}, E(\cdot, \tfrac{1}{2} - \overline{z})\rangle  = \nonumber \\
  &= \Res_{s = 1} \frac{(4\pi)^k \zeta(2s - 2)\Gamma(2s - 2)}{\zeta^*(2s-2)\Gamma(s)\Gamma(s + k - 1)}\langle \mathcal{V}_{f,g}, E(\cdot, 2-\overline{s})\rangle,
\end{align}
which can be interpreted as (see Section~\ref{ssec:rankin})
\begin{align}
  -\frac{(4\pi)^k}{\, \Gamma(k)}\frac{3}{\pi} \langle f\Im(\cdot)^k, g \rangle = - \Res_{s = 1}\frac{L(s, f\times g)}{\zeta(2)}.
\end{align}

The next pole of~\eqref{line:2spectralexp} also occurs in the first residual term appearing in~\eqref{line:firstresidual}, occurring at $s = \frac{1}{2}$ from the gamma function in the numerator of $G(s, 0, z)$. Otherwise, the continuous spectrum~\eqref{line:2spectralexp} is analytic for $\Re s \geq \frac{1}{2}$. Combining this continuation with the continuation of the discrete spectrum~\eqref{line:1spectralexp}, we get the following lemma.

\begin{lemma}\label{lemma:Zswfg}
  Maintaining the notation from Proposition~\ref{prop:spectralexpansionfull}, $Z(s, 0, f\times g)$ has meromorphic continuation in $s$ to $\Re s \geq \frac{1}{2}$ with poles at most at $s= 1$ and $s = \frac{1}{2}$.
  The rightmost pole is at $s = 1$ and has residue
  \begin{align}\label{eq:Zswfg_poleatone}
    -\frac{(4\pi)^k}{\, \Gamma(k)}\frac{3}{\pi} \langle f\Im(\cdot)^k, g \rangle = - \Res_{s = 1}\frac{L(s, f\times g)}{\zeta(2)}.
  \end{align}
\end{lemma}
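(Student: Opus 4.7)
The plan is to assemble the claim directly from the spectral expansion in Proposition~\ref{prop:spectralexpansionfull}, specialized to $w=0$, by separately locating the poles of the discrete and continuous spectra in the half-plane $\Re s \geq \tfrac12$ and then identifying the rightmost residue with the Rankin--Selberg residue via the integral representation \eqref{line:2rankinselberg}.

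For the discrete spectrum \eqref{line:1spectralexp} I would observe that the only source of poles in $s$ is the numerator factor $\Gamma(s-\tfrac12+it_j)\Gamma(s-\tfrac12-it_j)$ of $G(s,it_j)$, whose rightmost apparent poles sit at $s=\tfrac12\pm it_j$. By Lemma~\ref{lem:oddorthogonaltoeven} only even $\mu_j$ contribute, and by Lemma~\ref{lem:Litj_equals_zero} the factor $L(s-\tfrac12,\mu_j)$ has trivial zeros at precisely $s=\tfrac12\pm it_j$ that cancel these simple gamma poles. Since $\theta<\tfrac12$, the next apparent poles at $s=-\tfrac12\pm it_j$ lie to the left of $\Re s=\tfrac12$, so \eqref{line:1spectralexp} is analytic throughout $\Re s\geq\tfrac12$.

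For the continuous spectrum \eqref{line:2spectralexp} I would run the contour-shifting argument laid out in Section~\ref{conspec} with $w=0$: the integral along $\Re z=0$ continues analytically past the apparent pole on $\Re s=\tfrac{3}{2}$ by first shifting $z$ right to pick up the residue \eqref{eq:FEfirstresidue}, and then, after crossing $\Re s=\tfrac{3}{2}$, shifting $z$ back to zero, which by the functional equation of the Eisenstein series and $\zeta^*$ contributes an equal-and-opposite residue. The net result in the strip $\tfrac12<\Re s<\tfrac32$ is the integral on $\Re z=0$ plus the single residual term~\eqref{line:firstresidual}; a second iteration of the same procedure past $\Re s=\tfrac12$ adds the residual in~\eqref{line:secondresidual} and gives a meromorphic continuation into $-\tfrac12<\Re s<\tfrac12$. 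In $\Re s\geq\tfrac12$ the shifted integral is regular, so all possible poles come from the residual terms; with $w=0$ these contribute at most a pole at $s=1$ (from the Eisenstein series $\langle \mathcal{V}_{f,g},E(\cdot,2-\overline{s})\rangle$ in \eqref{eq:FEfirstresidue}) and at $s=\tfrac12$ (from the gamma factor $\Gamma(2s-2)$ combined with the Eisenstein factor after reflection, or equivalently from the next apparent gamma pole).

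To finish I would compute the residue at $s=1$ by evaluating \eqref{eq:FEfirstresidue} at $w=0$, $s\to 1$. The ratio $\zeta(2s-2)\Gamma(2s-2)/\zeta^*(2s-2)=\pi^{s-1}/\Gamma(s-1)$ has a simple zero at $s=1$, so only the simple pole of $E(z,2-\overline{s})$ at $s=1$ survives; together with $\Gamma(s)\Gamma(s+k-1)\to\Gamma(k)$ and the residue $\Res_{w=1}E(z,w)=3/\pi$ this produces $-\tfrac{(4\pi)^k}{\Gamma(k)}\tfrac{3}{\pi}\langle f\,\Im(\cdot)^k,g\rangle$. Comparison with the Rankin--Selberg integral representation \eqref{line:2rankinselberg} at $s=1$, which expresses the residue of $L(s,f\times g)/\zeta(2s)$ at $s=1$ in exactly the same terms (up to the opposite sign), gives the stated identification. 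The only mildly delicate step is the cancellation argument in the discrete spectrum, since it requires both the parity vanishing and the trivial zeros to line up; everything else is bookkeeping of residues.
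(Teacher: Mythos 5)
Your proposal follows the paper's own route essentially step for step: parity (Lemma~\ref{lem:oddorthogonaltoeven}) plus the trivial zeros $L(\pm it_j,\mu_j)=0$ (Lemma~\ref{lem:Litj_equals_zero}) to clear the apparent poles of the discrete spectrum at $s=\tfrac12\pm it_j$, the contour-shifting continuation of the continuous spectrum producing the residual term~\eqref{line:firstresidual}, and the identification of the $s=1$ residue with $-\Res_{s=1}L(s,f\times g)/\zeta(2)$ via~\eqref{line:2rankinselberg}. That structure is correct and is exactly what the paper does.

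However, the residue evaluation at $s=1$ is garbled as written. The identity you assert, $\zeta(2s-2)\Gamma(2s-2)/\zeta^*(2s-2)=\pi^{s-1}/\Gamma(s-1)$, drops a factor: since $\zeta^*(2s-2)=\pi^{-(s-1)}\Gamma(s-1)\zeta(2s-2)$, the correct ratio is $\pi^{s-1}\Gamma(2s-2)/\Gamma(s-1)$, which is \emph{regular and nonzero} at $s=1$ with limit $\tfrac12$ (the zero of $1/\Gamma(s-1)$ cancels the pole of $\Gamma(2s-2)$). Moreover, your stated logic is internally inconsistent: if the full prefactor really had a simple zero at $s=1$, it would annihilate the simple pole of $\langle\mathcal{V}_{f,g},E(\cdot,2-\overline{s})\rangle$ and force the residue in~\eqref{eq:Zswfg_poleatone} to vanish, contradicting the lemma. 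The correct bookkeeping is: prefactor $\to\tfrac12$, $\Res_{s=1}\langle\mathcal{V}_{f,g},E(\cdot,2-\overline{s})\rangle=-\tfrac{3}{\pi}\langle\mathcal{V}_{f,g},1\rangle$ (note the sign from $w=2-s$), and $\langle\mathcal{V}_{f,g},1\rangle=2\langle f\,\Im(\cdot)^k,g\rangle$ because $\mathcal{V}_{f,g}$ is the symmetrized sum $y^k(f\overline{g}+T_{-1}(f\overline{g}))$; the $\tfrac12$ and the $2$ cancel to give $-\tfrac{(4\pi)^k}{\Gamma(k)}\tfrac{3}{\pi}\langle f\,\Im(\cdot)^k,g\rangle$. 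Your final formula is right, but only because these two omitted factors happen to cancel; as stated, the computation does not produce it.
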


Returning to the meromorphic continuation of~\eqref{line:2spectralexp} given just above, we evaluate the second residual term~\eqref{line:secondresidual}, which only appears for $\Re s < \frac{1}{2}$,
\begin{align}\label{eq:secondresidual}
  &\Res_{z = \frac{1}{2} - s} (4\pi)^k G(s, 0,  z)\mathcal{Z}(s, 0, z) \langle \mathcal{V}_{f,g}, E(\cdot, \tfrac{1}{2} - \overline{z}) \rangle \\
  &= (4\pi)^k \frac{\zeta(0)\zeta(2s - 1)\Gamma(2s - 1)}{\zeta^*(2-2s)\Gamma(s)\Gamma(s + k - 1)}\langle \mathcal{V}_{f,g}, E(\cdot, \overline{s}) \rangle. \label{eq:secondresidual_example}
\end{align}

By using the gamma duplication formula
\[
  \frac{\Gamma(2s - 1)}{\Gamma(s)} = \Gamma(s - \tfrac{1}{2})\frac{2^{2s-2}}{\sqrt \pi}
\]
and functional equations for $\zeta(s)$ and $E(z,s)$, we can rewrite~\eqref{eq:secondresidual} as
\begin{equation}\label{eq:secondresidualsimple}
  - \frac{L(s, f\times g)}{\zeta(2s)}.
\end{equation}
Thus this second residual term has poles at zeroes of $\zeta(2s)$.

More generally, a residual term
\begin{equation}\label{eq:generalresidual}
  \frac{(-1)^j(4\pi)^k}{\Gamma(j+1)} \frac{\zeta(-j)\zeta(2s+j-1)\Gamma(2s+j-1)}{\zeta^*(2-2s-2j) \Gamma(s)\Gamma(s+k-1) } \langle \mathcal{V}_{f,g}, E(\cdot,\overline{s+j})\rangle
\end{equation}
is introduced for $\Re s < \frac{1}{2} - j$ in the continuation past the apparent polar line $\Re s = \frac{1}{2} - j$ for each integer $j \geq 0$. We recognize~\eqref{eq:secondresidual_example} as the $j = 0$ case of~\eqref{eq:generalresidual}. We note that the first residual term~\eqref{line:firstresidual} is distinguished in coming from a pole from the zeta function, while all further residual terms have the same form as~\eqref{eq:generalresidual} and come from poles from gamma functions. As in~\eqref{eq:secondresidualsimple}, the Eisenstein series appearing in the $j$th residual~\eqref{eq:generalresidual} introduces poles at $s = \frac{\rho}{2} - j$ for each nontrivial zero $\rho$ of $\zeta(s)$.


\subsection{Analytic Behaviour of $W(s; f,g)$}

Recall that
\[
  W(s; f,g) = \frac{L(s, f\times g)}{\zeta(2s)} + Z(s, 0, f\times g).
\]
Then Lemma~\ref{lemma:Zswfg} shows that the leading pole of $\frac{L(s, f\times g)}{\zeta(2s)}$ at $s = 1$ cancels perfectly with the leading pole of $Z(s, 0, f\times g)$. So $W(s; f,g)$ is analytic for $\Re s > \tfrac{1}{2}$ and has a pole at $s = \tfrac{1}{2}$. Further, since the meromorphic continuation of $Z(s, 0, f\times g)$ has~\eqref{eq:secondresidualsimple} as a residual for $\Re s < \tfrac{1}{2}$, we see that the Rankin-Selberg $L$-function $\frac{L(s,f\times g)}{\zeta(2s)}$ term completely cancels when $\Re s < \tfrac{1}{2}$. Then $W(s; f,g)$ has a meromorphic continuation to $\mathbb{C}$, and for $\Re s > -\tfrac{1}{2}$ the only possible poles are $s = \tfrac{1}{2}$, coming from the first residual term~\eqref{line:firstresidual} of the continuous spectrum~\eqref{line:2spectralexp} and those at $s = -\tfrac{1}{2} \pm it_j$, coming from the exceptional eigenvalues of the discrete spectrum~\eqref{line:1spectralexp}. (There are no exceptional eigenvalues for $\Gamma = \SL_2(\mathbb{Z})$).

Let us evaluate the residue of $Z(s, 0, f\times g)$ at the pole $s = \tfrac{1}{2}$. For ease, we write the first residual term as a residue at $z = \tfrac{3}{2} - s$,
\begin{equation}
  (4\pi)^k \frac{\zeta(2s - 2)\Gamma(2s - 2)}{\zeta^*(2s - 2)\Gamma(s)\Gamma(s + k - 1)}\langle \mathcal{V}_{f,g}, E(\cdot, 2 - \overline{s}) \rangle.
\end{equation}
By applying the gamma duplication formula, expanding the completed zeta function in the denominator and cancelling similar terms from the numerator and denominator, this becomes
\begin{equation}\label{eq:later_shortcut}
  \frac{(4\pi)^{s + k - 1}}{2\sqrt \pi} \frac{\Gamma(s - \frac{1}{2})}{\Gamma(s)\Gamma(s + k - 1)}\langle \mathcal{V}_{f,g}, E(\cdot, 2 - \overline{s}) \rangle.
\end{equation}
There is a pole at $s = \frac{1}{2}$ coming from the gamma function in the numerator. The residue at this pole is given by
\begin{equation}
  \frac{1}{2\sqrt \pi \Gamma(\frac{1}{2})}\frac{(4\pi)^{k - \frac{1}{2}}}{\Gamma(k - \frac{1}{2})}\langle \mathcal{V}_{f,g}, E(\cdot, \tfrac{3}{2}) \rangle.
\end{equation}
We rewrite this as a special value of the Rankin-Selberg $L$-function
\begin{equation}
  \frac{1}{2 \pi}\frac{(k - \frac{1}{2})}{4\pi} \frac{(4\pi)^{k + \frac{1}{2}}}{\Gamma(k + \frac{1}{2})}\langle \mathcal{V}_{f,g}, E(\cdot, \tfrac{3}{2}) \rangle = \frac{(k - \frac{1}{2})}{4\pi^2} \frac{L(\frac{3}{2}, f\times g)}{\zeta(3)}.
\end{equation}
This allows us to conclude the following theorem.

\begin{theorem}\label{thm:Wsfgmero}
  Let $f,g$ be two holomorphic cusp forms on $\SL_2(\mathbb{Z})$. Maintaining the same notation as above, the function $W(s; f,g)$ has a meromorphic continuation to $\mathbb{C}$ given by~\eqref{line:2rankinselberg} and Proposition~\ref{prop:spectralexpansionfull} with potential poles at $s$ with $\Re s \leq \tfrac{1}{2}$ and $s \in \mathbb{Z}\cup(\mathbb{Z} + \tfrac{1}{2})\cup\mathfrak{S}\cup\mathfrak{Z}$, where  $\mathfrak{Z}$ denotes the set of shifted zeta-zeroes $\{-1 + \frac{\rho}{2} - n: n \in \mathbb{Z}_{\geq 0}\}$, and $\mathfrak{S}$ denotes the set of shifted discrete types $\{-\tfrac{1}{2} \pm it_j - n: n \in \mathbb{Z}_{\geq 0}\}$.

The leading pole is at $s = \frac{1}{2}$ and
  \begin{equation}
    \Res_{s = \frac{1}{2}} W(s; f, g) = \frac{(k - \frac{1}{2})}{4\pi^2} \frac{L(\tfrac{3}{2}, f\times g)}{\zeta(3)}.
  \end{equation}
\end{theorem}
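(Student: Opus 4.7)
My plan is to leverage the decomposition
\[
W(s;f,g) = \frac{L(s,f\times g)}{\zeta(2s)} + Z(s,0,f\times g)
\]
together with the spectral expansion of $Z$ from Proposition~\ref{prop:spectralexpansionfull} and the iterated contour-shifting machinery of Section~\ref{ssec:meromorphic_continuation}. First I would invoke Lemma~\ref{lemma:Zswfg}: the rightmost pole of $Z(s,0,f\times g)$ at $s=1$ exactly cancels the simple pole of $L(s,f\times g)/\zeta(2s)$, so $W(s;f,g)$ is analytic on $\Re s > \tfrac{1}{2}$.

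Next, to extend $W$ across $\Re s = \tfrac{1}{2}$ and beyond, I would iterate the contour-shift of Section~\ref{conspec}: each crossing of an apparent polar line $\Re s = \tfrac{1}{2} - j$ deposits a residual term of the shape~\eqref{eq:generalresidual}. The crucial structural simplification, already anticipated in the text, is that the $j=0$ residual, after the gamma duplication formula and the functional equations for $\zeta$ and $E(\cdot,s)$, equals $-L(s,f\times g)/\zeta(2s)$; this cancels the non-spectral summand of $W$ identically on $\Re s < \tfrac{1}{2}$. Thus on $\Re s > -\tfrac{1}{2}$ the only contributions to $W$ besides the shifted contour integral and the discrete spectrum are the first residual~\eqref{line:firstresidual} (alive only at $s=\tfrac{1}{2}$) and, further left, the higher-$j$ residuals of~\eqref{eq:generalresidual}.

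I would then enumerate the possible pole locations of the surviving pieces: the gamma numerators in~\eqref{eq:generalresidual} produce poles at integer and half-integer points, giving $\mathbb{Z}\cup(\mathbb{Z}+\tfrac{1}{2})$; the Eisenstein series $E(\cdot,\overline{s+j})$ inherited in each residual contributes poles from the $1/\zeta(2\overline{s+j})$ factor at $s = \tfrac{\rho}{2}-j$ for nontrivial zeta-zeroes $\rho$, and since $j=0$ has been cancelled these contribute for $j\geq 1$, yielding precisely $\mathfrak{Z}=\{-1+\tfrac{\rho}{2}-n:n\geq 0\}$; and the gamma factors $\Gamma(s-\tfrac{1}{2}\pm it_j)$ of the discrete spectrum~\eqref{line:1spectralexp} produce poles at $s=\tfrac{1}{2}\pm it_j - n$, the first of which ($n=0$) is killed by Lemma~\ref{lem:Litj_equals_zero} so that only $\mathfrak{S}=\{-\tfrac{1}{2}\pm it_j-n:n\geq 0\}$ survives.

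Finally, for the residue at $s=\tfrac{1}{2}$: only the first residual~\eqref{line:firstresidual} contributes, since the shifted contour integral and the discrete spectrum are analytic there. Using the simplification~\eqref{eq:later_shortcut} reduces this residual to
\[
\frac{(4\pi)^{s+k-1}}{2\sqrt\pi}\,\frac{\Gamma(s-\tfrac{1}{2})}{\Gamma(s)\Gamma(s+k-1)}\,\langle \mathcal{V}_{f,g},E(\cdot,2-\overline{s})\rangle,
\]
and extracting the residue of $\Gamma(s-\tfrac{1}{2})$ at $s=\tfrac{1}{2}$, then rewriting the resulting Petersson inner product via the Rankin-Selberg identity~\eqref{line:2rankinselberg}, yields the stated value $\frac{(k-\tfrac{1}{2})}{4\pi^2}\frac{L(\tfrac{3}{2},f\times g)}{\zeta(3)}$. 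The main obstacle I anticipate is the bookkeeping across the iterated contour shifts: one must verify that the $j=0$ residual really cancels the $L/\zeta(2s)$ term identically (so no spurious half-integer pole survives on $\Re s = \tfrac{1}{2}$ beyond the one accounted for), and that every pole we list is genuinely among $\mathbb{Z}\cup(\mathbb{Z}+\tfrac{1}{2})\cup\mathfrak{S}\cup\mathfrak{Z}$ rather than arising from some interaction between residuals and the shifted contour integral.
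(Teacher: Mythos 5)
Your proposal follows the paper's own argument essentially step for step: the cancellation at $s=1$ via Lemma~\ref{lemma:Zswfg}, the identification of the $j=0$ residual~\eqref{eq:secondresidual_example} with $-L(s,f\times g)/\zeta(2s)$ killing the diagonal for $\Re s<\tfrac12$, the trivial-zero cancellation of Lemma~\ref{lem:Litj_equals_zero} shifting the discrete-spectrum poles to $\mathfrak{S}$, the enumeration of poles from the higher residuals~\eqref{eq:generalresidual} giving $\mathbb{Z}\cup(\mathbb{Z}+\tfrac12)\cup\mathfrak{Z}$, and the residue at $s=\tfrac12$ computed from~\eqref{eq:later_shortcut} and rewritten via~\eqref{line:2rankinselberg}. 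This is correct and matches the paper's proof, including the bookkeeping caveats you flag.
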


With this theorem and the decomposition from Proposition~\ref{prop:decomposition}, we have the meromorphic continuation of the Dirichlet series $D(s, S_f \times S_f)$.

\begin{remark}Very similar work gives the meromorphic continuation for $D(s, S_f \times \overline{S_g})$, mainly replacing $\overline{g}$ with $T_{-1}g$ in the above formulation. This distinction only matters at higher levels when $f$ and $g$ have nontrivial nebentypus, and the spectral expansion is modified accordingly. \end{remark}

\section{Second Moment of Sums of Fourier Coefficients}\label{sec:second_moment_of_sums}

Now we will use the results of the previous section to prove Theorem~\ref{thm:second_moment} for cusp forms on $\SL_2(\mathbb{Z})$ and suggest how the argument generalizes.
First, we state a simple corollary of Theorem~\ref{thm:Wsfgmero}.

\begin{corollary}\label{lem:Wsfg_analytic}
  Let $\theta = \max_j {\Im(t_j)} \leq \frac{7}{64}$ denote the progress towards Selberg's Eigenvalue Conjecture for the given congruence subgroup $\Gamma$.
  The function
  \[
    W(s; f, g) = \frac{L(s, f\times g)}{\zeta(2s)} + Z(s, 0, f\times g),
  \]
  appearing in Proposition~\ref{prop:decomposition}, is analytic for $\Re s > -\frac{1}{2} + \theta$ except for a simple pole at $s = \frac{1}{2}$.
\end{corollary}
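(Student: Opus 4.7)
The plan is to read this corollary directly off Theorem~\ref{thm:Wsfgmero} by pruning its list of potential poles to the half-plane $\Re s > -\tfrac{1}{2} + \theta$. The theorem already pins down the pole at $s = \tfrac{1}{2}$ with its residue, so the task reduces to verifying that no other point of $\mathbb{Z} \cup (\mathbb{Z} + \tfrac{1}{2}) \cup \mathfrak{S} \cup \mathfrak{Z}$ lying in this half-plane is a genuine singularity of $W(s; f, g)$.

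First I would dispose of the two parametric families. The shifted exceptional eigenvalues $\mathfrak{S} = \{-\tfrac{1}{2} \pm it_j - n : n \geq 0\}$ have real part at most $-\tfrac{1}{2} + |\Im(t_j)| \leq -\tfrac{1}{2} + \theta$, so they lie on or to the left of the boundary of the claimed region and contribute no pole to the open half-plane. The shifted nontrivial zeta zeros $\mathfrak{Z} = \{-1 + \tfrac{\rho}{2} - n : n \geq 0\}$ satisfy $\Re(-1 + \tfrac{\rho}{2}) < -\tfrac{1}{2}$, using only the classical containment $0 < \Re \rho < 1$, and hence lie strictly to the left of $-\tfrac{1}{2}$. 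What remains is to check the integer and half-integer points in $\Re s > -\tfrac{1}{2} + \theta$.

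For $\Re s \geq \tfrac{1}{2}$, Lemma~\ref{lemma:Zswfg} confines the poles of $Z(s, 0, f \times g)$ to $\{1, \tfrac{1}{2}\}$, and $L(s, f \times g)/\zeta(2s)$ contributes no pole beyond $s = 1$; the residue computation displayed just before Theorem~\ref{thm:Wsfgmero} shows that these two contributions at $s = 1$ cancel exactly, leaving $s = \tfrac{1}{2}$ as the sole survivor. In the strip $-\tfrac{1}{2} + \theta < \Re s < \tfrac{1}{2}$, the second residual term of $Z(s, 0, f \times g)$ simplifies by \eqref{eq:secondresidualsimple} to $-L(s, f\times g)/\zeta(2s)$, again cancelling the Rankin--Selberg contribution, so $W(s; f, g)$ reduces here to the discrete spectrum \eqref{line:1spectralexp}, the shifted contour integral on $\Re z = 0$, and the first residual \eqref{eq:later_shortcut}. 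The discrete spectrum is holomorphic in this open strip because Lemmas~\ref{lem:Litj_equals_zero} and \ref{lem:oddorthogonaltoeven} cancel its first line of apparent poles and push the next line out to $\Re s = -\tfrac{1}{2} + \theta$; the contour integral is holomorphic because the next line of $G(s,z)$-poles meeting $\Re z = 0$ occurs at $\Re s = -\tfrac{1}{2}$; and the first residual \eqref{eq:later_shortcut} has singularities only at $s = \tfrac{1}{2}$ (from $\Gamma(s - \tfrac{1}{2})$) and $s = 1$ (from the Eisenstein series), both outside the open strip. Simplicity of the pole at $s = \tfrac{1}{2}$ is then immediate from the $\Gamma(s - \tfrac{1}{2})$ factor in \eqref{eq:later_shortcut}. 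The one step needing genuine care is the discrete-spectrum claim, since one must verify that no intermediate pole line accumulates between $-\tfrac{1}{2} + \theta$ and $\tfrac{1}{2}$; this is precisely what the two trivial-zero lemmas guarantee.
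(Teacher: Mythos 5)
Your argument is correct and follows the same route as the paper: the corollary is read off from Theorem~\ref{thm:Wsfgmero} together with the analysis of Section~\ref{sec:Zswfg} (cancellation of the $s=1$ pole via Lemma~\ref{lemma:Zswfg}, cancellation of $L(s,f\times g)/\zeta(2s)$ against the residual term~\eqref{eq:secondresidualsimple} for $\Re s<\tfrac12$, trivial-zero and parity lemmas for the discrete spectrum, and the $\Gamma(s-\tfrac12)$ factor in~\eqref{eq:later_shortcut} giving the simple pole at $s=\tfrac12$). Your extra care in checking the integer and half-integer candidates (notably $s=0$) inside the half-plane is exactly the content the paper leaves implicit in calling this a ``simple corollary.''
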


We now consider a smooth cutoff integral of $D(s, S_f \times S_g)$. Using the well-known integral transform,
\begin{equation}\label{eq:smoothcutofftransform}
  \frac{1}{2\pi i}\int_{(\sigma)}D(s, S_f \times S_g) X^s \Gamma(s)ds = \sum_{n \geq 1} \frac{S_f(n) \overline{S_g(n)} }{n^{k - 1}}e^{-n/X},
\end{equation}
for $\sigma$ large enough to be in the domain of absolute convergence of $D(s, S_f\times S_g)$, say  $\sigma = 4$. To understand the right hand side of~\eqref{eq:smoothcutofftransform}, we decompose the left hand side as in Proposition~\ref{prop:decomposition}. We thus investigate the two integrals,
\begin{equation}\label{eq:smoothintegral1}
  \frac{1}{2\pi i}\int_{(4)} W(s; f,g) X^s \Gamma(s)ds
\end{equation}
and
\begin{equation}\label{eq:smoothintegral2}
  \frac{1}{(2\pi i)^2}\int_{(4)}\int_{(2)} W(s-z; f,g)\zeta(z)\frac{\Gamma(z)\Gamma(s - z + k - 1)}{\Gamma(s+k-1)}\, dz \, X^s  \Gamma(s) \, ds.
\end{equation}

\begin{lemma}\label{lem:Wsfg_regular}
  Fix an $\epsilon > 0$. Then the integral~\eqref{eq:smoothintegral1} is
  \begin{align}\label{eqsplit}
    \begin{split}
    & \frac{(k - \tfrac{1}{2})}{4\pi^2} \frac{L(\frac{3}{2}, f\times g)}{\zeta(3)} \Gamma(\tfrac{1}{2}) X^{1/2} + O_\epsilon(X^\epsilon),
    \end{split}
  \end{align}
  where $O_\epsilon(\cdot)$ indicates that the implicit constant depends on $\epsilon$.
\end{lemma}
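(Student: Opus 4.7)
The approach is standard contour shifting. Starting from the line $\Re s = 4$, which lies well inside the domain of absolute convergence of $D(s, S_f\times S_g)$ and hence of $W(s; f,g)$, the plan is to shift the line of integration leftward to $\Re s = \epsilon$. By Corollary~\ref{lem:Wsfg_analytic}, together with $\theta = 0$ for $\SL_2(\mathbb{Z})$, the integrand $W(s; f,g)\,\Gamma(s) X^s$ is meromorphic in the strip $\epsilon \le \Re s \le 4$ with a unique simple pole at $s = \tfrac{1}{2}$, arising from $W$. Applying the residue theorem and invoking Theorem~\ref{thm:Wsfgmero} yields the contribution
\[
\Res_{s = 1/2} \bigl[W(s;f,g)\,\Gamma(s) X^s\bigr] = \frac{(k-\tfrac{1}{2})}{4\pi^2}\frac{L(\tfrac{3}{2}, f\times g)}{\zeta(3)}\,\Gamma(\tfrac{1}{2}) X^{1/2},
\]
which is precisely the stated main term.

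It then remains to bound the shifted integral $\tfrac{1}{2\pi i}\int_{(\epsilon)} W(s; f,g)\,\Gamma(s) X^s\, ds$ by $O_\epsilon(X^\epsilon)$. On the line $\Re s = \epsilon$ one has $|X^s| = X^\epsilon$, so pulling this factor out it suffices to show that $\int_\mathbb{R} |W(\epsilon + it; f,g)\,\Gamma(\epsilon + it)|\, dt$ converges (with value depending on $\epsilon$). Stirling yields exponential decay $|\Gamma(\epsilon + it)| \ll_\epsilon (1+|t|)^{\epsilon - 1/2}\, e^{-\pi |t|/2}$, so the integral converges as soon as $W(\epsilon + it; f,g)$ is known to grow at most polynomially in $|t|$.

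The main obstacle is precisely this polynomial growth of $W$ along the vertical line $\Re s = \epsilon$. I would decompose $W = L(s, f\times g)/\zeta(2s) + Z(s, 0, f\times g)$. The Rankin-Selberg piece is polynomially bounded by convexity combined with the classical zero-free region of $\zeta(2s)$. For $Z(s, 0, f\times g)$ one substitutes the spectral expansion of Proposition~\ref{prop:spectralexpansionfull}, as meromorphically continued into the strip $\Re s > -\tfrac{1}{2}$ in Section~\ref{ssec:meromorphic_continuation}: the discrete sum~\eqref{line:1spectralexp}, the shifted contour integral, and the accumulated residual terms~\eqref{line:firstresidual}--\eqref{line:secondresidual} (simplified as in~\eqref{eq:later_shortcut}) each have at most polynomial growth in $|t|$ by Remark~\ref{extraremark}, using the Watson-K\i{}ral triple product bounds to control the inner products and standard convexity bounds on $L(s - \tfrac{1}{2}, \mu_j)$ on vertical lines. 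The contour shift itself is justified by truncating at $|\Im s| = T$ and letting $T \to \infty$, the horizontal segments vanishing thanks to the rapid decay of $\Gamma$.
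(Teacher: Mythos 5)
Your proposal is correct and follows essentially the same route as the paper: shift the contour from $\Re s = 4$ to $\Re s = \epsilon$, collect the residue at $s = \tfrac{1}{2}$ given by Theorem~\ref{thm:Wsfgmero}, and bound the shifted integral by the polynomial growth of $W(s;f,g)$ on vertical lines (via Remark~\ref{extraremark} and convexity) against the exponential decay of $\Gamma(s)$. The only difference is that you spell out the growth estimates that the paper's proof merely asserts.
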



\begin{proof}
  Shifting the line of integration to $\Re s = \epsilon$ passes the pole at $s=\frac{1}{2}$m with residue as given by Theorem~\ref{thm:Wsfgmero}. To bound the shifted integral, we observe that $W(s; f,g)$ has at most polynomial growth in vertical strips while $\Gamma(s)$ has exponential decay.
\end{proof}

\begin{lemma}\label{lem:Wsfg_MellinBarnes}
  Fix an $\epsilon > 0$. Then the integral~\eqref{eq:smoothintegral2} is
  \begin{align}
    \begin{split}
    & \frac{1}{4\pi^2} \frac{L(\frac{3}{2}, f\times g)}{\zeta(3)} \Gamma(\tfrac{3}{2}) X^{3/2} + O_\epsilon(X^{\frac{1}{2} + \theta + \epsilon}).
    \end{split}
  \end{align}
\end{lemma}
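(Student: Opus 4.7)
The plan is to convert the double integral into its main term plus lower-order pieces by successive contour shifts, exploiting the pole structure already determined in Theorem~\ref{thm:Wsfgmero} and Corollary~\ref{lem:Wsfg_analytic}. First I would shift the inner $z$-contour leftward from $\Re z = 2$ to $\Re z = -\tfrac{1}{2} + \theta + \epsilon$. Since $\Re s = 4$ throughout this step, $\Re(s-z) \geq 2$, so $W(s-z; f,g)$ remains analytic in $z$, and the only crossed residues come from the simple pole of $\zeta(z)$ at $z = 1$ and the simple pole of $\Gamma(z)$ at $z = 0$.

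The residue at $z = 1$ produces the single integral
\[
\frac{1}{2\pi i}\int_{(4)} \frac{W(s-1; f, g)}{s+k-2}\, X^s \Gamma(s)\, ds,
\]
and the claimed main term emerges by then shifting this $s$-contour to $\Re s = \tfrac{1}{2} + \theta + \epsilon$, crossing only the pole of $W(s-1;f,g)$ at $s = \tfrac{3}{2}$. Using the residue formula in Theorem~\ref{thm:Wsfgmero}, the factor $k-\tfrac{1}{2}$ arising from $\Res_{s=1/2}W(s;f,g)$ cancels precisely against $s+k-2$ evaluated at $s = \tfrac{3}{2}$, leaving exactly $\frac{1}{4\pi^2}\frac{L(\tfrac{3}{2}, f\times g)}{\zeta(3)}\Gamma(\tfrac{3}{2})X^{3/2}$. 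The corresponding shifted integral is $O(X^{1/2+\theta+\epsilon})$ by the polynomial growth of $W$ in vertical strips (Remark~\ref{extraremark}) and exponential decay of $\Gamma(s)$. The residue at $z = 0$ reduces, via $\Res_{z=0}\Gamma(z) = 1$ and $\zeta(0) = -\tfrac{1}{2}$, to $-\tfrac{1}{2}$ times the integral handled in Lemma~\ref{lem:Wsfg_regular}, hence contributes $O(X^{1/2})$, which is absorbed into the error $O(X^{1/2+\theta+\epsilon})$. The doubly shifted integral along $\Re s = \tfrac{1}{2} + \theta + \epsilon$, $\Re z = -\tfrac{1}{2}+\theta+\epsilon$ is bounded by $|X^s| = X^{1/2+\theta+\epsilon}$ times an absolutely convergent double integral, again $O(X^{1/2+\theta+\epsilon})$.

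The main obstacle is justifying the contour shifts and verifying absolute convergence of the doubly shifted integral uniformly in two variables. The exponential decay of $\Gamma(s)$ and $\Gamma(z)$ easily handles the individual directions $|\Im s|, |\Im z|\to\infty$, but the ratio $\Gamma(s-z+k-1)/\Gamma(s+k-1)$ and the factor $W(s-z;f,g)$ couple the two variables nontrivially; Stirling's formula combined with the polynomial bounds of Remark~\ref{extraremark} must be applied carefully to confirm that the coupled growth is dominated by the gamma decay along every diagonal direction. The analyticity of $W(s-z;f,g)$ in the strip $\Re(s-z) \geq 1$ traversed during both shifts, which guarantees that no additional residues appear, follows directly from Corollary~\ref{lem:Wsfg_analytic}.
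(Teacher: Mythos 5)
Your proposal is correct and follows essentially the paper's own argument: shift the $z$-contour past the pole of $\zeta(z)$ at $z=1$, extract the main term from the resulting single integral via the pole of $W(s-1;f,g)$ at $s=\tfrac{3}{2}$ (where the factor $k-\tfrac{1}{2}$ from the residue of $W$ cancels against $s+k-2$), and bound the remaining shifted double integral using the exponential decay of the gamma factors together with the polynomial growth of $W$ in vertical strips. The only cosmetic difference is that you push the $z$-contour past $z=0$ and pick up an extra residue of $\Gamma(z)$, which you correctly reduce to the integral of Lemma~\ref{lem:Wsfg_regular} and absorb into the error term, whereas the paper stops at $\Re z=\epsilon$ and never meets that pole.
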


\begin{proof}

  We first shift the $z$ line of integration to $\epsilon$, passing a pole at $z = 1$ from $\zeta(z)$ with residue
  \begin{equation}
    \frac{1}{2\pi i}\int_{(4)}W(s - 1; f,g)\frac{1}{s + k -2}X^s \Gamma(s)\, ds.
  \end{equation}
  The remaining analysis of this integral is almost identical to the analysis of~\eqref{eq:smoothintegral1}. We shift the line of integration to $\Re s = \frac{1}{2} + \theta + \epsilon$, passing over the pole at $s=\frac{1}{2}$. The integrand has exponential decay in vertical strips, so the $s$-shifted integral is $O_\epsilon(X^{\frac{1}{2} + \theta + \epsilon})$.

  All that remains is the shifted double integral
  \begin{equation}
    \frac{1}{(2\pi i)^2}\int_{(4)}\int_{(\epsilon)} W(s-z; f,g)\zeta(z)\frac{\Gamma(z)\Gamma(s - z + k - 1)}{\Gamma(s+k-1)} \, dz \, X^s \Gamma(s) \, ds.
  \end{equation}
  We shift the line of $s$ integration to $\Re s = \tfrac{1}{2} + 2\epsilon$ without encountering any poles. Again we have exponential decay in vertical strips in both $s$ and $z$, so we can conclude that this integral is also $O_\epsilon(X^{\tfrac{1}{2} + 2\epsilon})$. Putting these together gives the lemma.

\end{proof}

By combining Lemmas~\ref{lem:Wsfg_regular} and~\ref{lem:Wsfg_MellinBarnes}, we have proved Theorem~\ref{thm:second_moment} for $D(s, S_f \times S_g)$, as stated in the introduction, for level one.

\begin{theorem*}
  If $f$ and $g$ are holomorphic cusp forms for $\SL_2\mathbb(\mathbb{Z})$, then for any $\epsilon > 0$,
  \begin{equation*}
    \frac{1}{X} \sum_{n \geq 1}\frac{S_f(n)\overline{S_g(n)}}{n^{k - 1}}e^{-n/X} = CX^{\frac{1}{2}} + O_{f,g,\epsilon}(X^{-\frac{1}{2} + \theta + \epsilon})
  \end{equation*}
  where
  \begin{equation*}
    C = \frac{\Gamma(\tfrac{3}{2})}{4\pi^2} \frac{L(\frac{3}{2}, f\times g)}{\zeta(3)}  = \frac{\Gamma(\tfrac{3}{2})}{4\pi ^2} \sum_{n \geq 1} \frac{a(n)\overline{b(n)}}{n^{k + \frac{1}{2}}}.
  \end{equation*}
\end{theorem*}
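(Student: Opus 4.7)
My approach is to combine Lemmas~\ref{lem:Wsfg_regular} and~\ref{lem:Wsfg_MellinBarnes} via the Mellin inversion identity~\eqref{eq:smoothcutofftransform} and the decomposition from Proposition~\ref{prop:decomposition}. Taking $\sigma = 4$, which lies in the region of absolute convergence of $D(s, S_f \times S_g)$, the right-hand side of~\eqref{eq:smoothcutofftransform} expands as the sum of the two contour integrals~\eqref{eq:smoothintegral1} and~\eqref{eq:smoothintegral2}, each of which has already been evaluated.

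From Lemma~\ref{lem:Wsfg_regular}, the integral~\eqref{eq:smoothintegral1} contributes $\tfrac{(k-1/2)}{4\pi^2} \cdot \tfrac{L(3/2, f\times g)}{\zeta(3)}\,\Gamma(\tfrac{1}{2})\,X^{1/2} + O_\epsilon(X^\epsilon)$. From Lemma~\ref{lem:Wsfg_MellinBarnes}, the integral~\eqref{eq:smoothintegral2} contributes $\tfrac{1}{4\pi^2} \cdot \tfrac{L(3/2, f\times g)}{\zeta(3)}\,\Gamma(\tfrac{3}{2})\,X^{3/2} + O_\epsilon(X^{1/2+\theta+\epsilon})$, which dominates. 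Adding these and dividing both sides of~\eqref{eq:smoothcutofftransform} by $X$ produces the main term $CX^{1/2}$ with $C = \Gamma(\tfrac{3}{2})/(4\pi^2) \cdot L(\tfrac{3}{2}, f\times g)/\zeta(3)$, while the $X^{1/2}$ contribution from Lemma~\ref{lem:Wsfg_regular} becomes $O_\epsilon(X^{-1/2})$ after the division and is absorbed into the stated error $O_\epsilon(X^{-1/2+\theta+\epsilon})$. The equivalent expression for $C$ in the form $\Gamma(\tfrac{3}{2})/(4\pi^2) \cdot \sum a(n)\overline{b(n)}/n^{k+1/2}$ follows from~\eqref{eq:rankinselberg} evaluated at $s = 3/2$, where the Dirichlet series converges absolutely by Deligne's bound.

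The principal work has already been discharged in the preceding lemmas and in Theorem~\ref{thm:Wsfgmero}; the main obstacle there is establishing polynomial growth of $W(s; f, g)$ in vertical strips, so that shifting the $s$-contour past $\Re s = \tfrac{1}{2} + \theta + \epsilon$ and the $z$-contour past $\Re z = \epsilon$ produces acceptable remainder terms after multiplying by $\Gamma(s)$. This is exactly what the exponential decay of $\Gamma$ on vertical lines, combined with Remark~\ref{extraremark} and the Watson-type triple product bounds cited there, provides. Once those contour shifts are justified, the two nontrivial residue contributions — the simple pole at $s = \tfrac{1}{2}$ of $W(s; f, g)$ (from Theorem~\ref{thm:Wsfgmero}) and the simple pole at $z = 1$ of $\zeta(z)$ followed by the pole at $s = \tfrac{1}{2}$ of $W(s-1; f, g)$ — together furnish exactly the main term $C X^{1/2}$ after the final division by $X$.
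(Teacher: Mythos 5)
Your proposal is correct and follows essentially the same route as the paper: the theorem is exactly the combination of Lemmas~\ref{lem:Wsfg_regular} and~\ref{lem:Wsfg_MellinBarnes} through the transform~\eqref{eq:smoothcutofftransform} and Proposition~\ref{prop:decomposition}, with the $X^{1/2}$ term from~\eqref{eq:smoothintegral1} absorbed into the error after dividing by $X$ and the identity $L(\tfrac32,f\times g)/\zeta(3)=\sum a(n)\overline{b(n)}n^{-(k+\frac12)}$ coming from~\eqref{eq:rankinselberg}. Only your closing sentence is slightly imprecise — the pole of $W(s-1;f,g)$ is encountered at $s=\tfrac32$, and it alone produces the main term $CX^{1/2}$, the residue at $s=\tfrac12$ of $W(s;f,g)$ contributing only to the error — but your earlier accounting already handles this correctly.
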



\begin{remark}
  The main term in the above theorem, $CX^{\frac{1}{2}}$, mostly agrees with the main term for the sharp cut-off in~\cite{HafnerIvic89} as stated in ~\eqref{eq:OnAverageSquares} and~\eqref{eq:constantCfromHaffner} where $f=g$, though there are a few notable differences. Here we have normalized the Dirichlet series by dividing each $|S_f(n)|^2$ term by $n^{k-1}$, so the power of $X$ is $\frac{1}{2}$ rather than $k-1+\frac{1}{2}$. Furthermore, as we are performing a smooth cut-off, we have an extra factor of $\Gamma(\tfrac{3}{2})$ owing to the $\Gamma(s)$ in the inverse Mellin transform in~\eqref{eq:smoothcutofftransform}. It is not hard to check that performing a sharp cut-off on the non-normalized Dirichlet series by integrating against $1/s$ instead of $\Gamma(s)$  gives the exact same formula for the main term in ~\eqref{eq:constantCfromHaffner}.
\end{remark}

\begin{remark}
When proving the analogue of this theorem for $D(s, S_f \times \overline{S_g})$, there are only significant differences when $f$ and $g$ have nontrivial associated non-real nebentypus. As noted before case, the primary differences come from slightly more complicated Eisenstein series, $L$-functions associated to the Eisenstein series, and functional equations for these $L$-functions. We leave this out of the statement of the above theorem since for $\SL_2(\mathbb{Z})$ all automorphic forms are complex linear combinations of eigenforms with real coefficients, so the distinction between $S_f(n)\overline{S_g(n)}$ and $S_f(n)S_g(n)$ is trivial.
\end{remark}


\begin{remark}
  It is natural to try to shift the lines of integration further left, but this does not give much improvement. In Section~\ref{sec:Zswfg}, we see that $Z(s, 0, f\times g)$ has a line of poles when $\Re s = -\tfrac{1}{2} + \theta$, indicating that the exponent of the error term in Theorem~\ref{thm:second_moment} cannot be lowered.

  If one could prove a sharp cutoff instead of a smoothed sum of the above shape, one could prove the Classical Conjecture.
\end{remark}

\section{Cancellation within $W(s; f,g)$ for arbitrary level}\label{sec:genweight}

While the techniques and methodology of Section~\ref{sec:Zswfg} should work for general level, it is not immediately clear that the continuous spectrum of $Z(s,0,f\times g)$ will always perfectly cancel the leading pole and potential poles from zeta zeroes of $\frac{L(s,f\times g)}{\zeta(2s)}$.
In this section, we apply results of Chandrasekharan and Narasimhan (\cite{chandrasekharan1962functional} and~\cite{chandrasekharan1964mean}) that show that significant cancellation always occurs in the Mellin integrals, shedding new light on cancellation of terms between the diagonal and off-diagonal parts of shifted convolution sums and on the behavior of certain sums of Kloosterman zeta functions.

We then explicitly show that further cancellation holds for integral weight modular forms on $\Gamma_0(N)$ when $N$ is square-free, and completely generalize Theorem~\ref{thm:second_moment} to squarefree level.

\subsection{Leading Cancellation}

Suppose $f(z) = \sum a(n)e(nz)$ is a cusp form on the congruence subgroup $\Gamma$, of weight $k \in \mathbb{Z}\cup(\mathbb{Z} + \frac{1}{2})$ with $k > 2$. We also allow the possibility for $f$ to have nebentypus $\chi$ if $\Gamma=\Gamma_0(N)$ for some $N \in \mathbb{N}$. Theorem~1 of~\cite{chandrasekharan1964mean} gives that
\begin{equation}\label{eq:CN_compare}
  \frac{1}{X} \sum_{n \leq X} \frac{\lvert S_f(n) \rvert^2}{n^{k-1}}  = C X^{\frac{1}{2}} + O(\log^2 X).
\end{equation}

Let us compare the result of Chandrasekharan and Narasimhan to the methodology of this paper. Performing the decomposition of $D(s,S_f\times S_f)$ from Proposition~\ref{prop:decomposition} still leads us to study $Z(s, 0, f\times f)$ and $W(s; f,f)$.
The Rankin-Selberg convolution $L(s, f\times f)/\zeta(2s)$ has poles at $s = 1$ and at zeroes of $\zeta(2s)$ in $(0, \tfrac{1}{2})$.
The contributions from these poles must cancel with those from the poles of $Z(s, 0, f\times f)$ in the inverse Mellin transform from which Theorem~\ref{thm:second_moment} is derived, as otherwise the machinery of Sections~\ref{sec:Zswfg} and~\ref{sec:second_moment_of_sums} contradict~\eqref{eq:CN_compare}.
In particular, the leading contribution of the diagonal term cancels perfectly with a leading contribution from the off-diagonal,
\begin{equation*}
  \Res_{s = 1}\sum_{n \geq 1}\frac{\lvert a(n) \rvert^2}{n^{s + k - 1}} = - \Res_{s = 1}\sum_{n,h \geq 1} \frac{a(n) \overline{a(n-h)}+a(n-h)\overline{a(n)}}{n^{s + k - 1}}.
\end{equation*}
We investigate this cancellation further by sketching the arguments of Sections~\ref{sec:Zswfg} and~\ref{sec:second_moment_of_sums} in greater generality.

The spectral decomposition corresponding to Proposition~\ref{prop:spectralexpansionfull} is more complicated since we must now use the Selberg Poincar\'{e} series on $\Gamma$,
\begin{equation}
  P_h(z,s) := \sum_{\gamma \in \Gamma_\infty \backslash \Gamma_0(N)} \Im(\gamma z)^s e(h\gamma \cdot z).
\end{equation}
The spectral decomposition of $P_h$ will involve Eisenstein series associated to each cusp, $\mathfrak{a}$, of $\Gamma$, which each have an expansion,
\begin{equation}
  E_\mathfrak{a} (z,w) = \delta_\mathfrak{a} y^w + \varphi_\mathfrak{a}(0,w) y^{1-w} + \sum_{m \neq 0} \varphi_\mathfrak{a} (m,w) W_w(|m|z),
\end{equation}
where $\delta_\mathfrak{a} = 1$ if $\mathfrak{a} = \infty$ and is $0$ otherwise, where
\begin{align*}
  \varphi_{\mathfrak{a}}(0, w) &= \sqrt \pi \frac{\Gamma(w - \frac{1}{2})}{\Gamma(w)} \sum_c c^{-2w}S_\mathfrak{a}(0,0;c),
  \end{align*}
  and
   \begin{align*}
    \varphi_{\mathfrak{a}}(m, w) &= \frac{\pi^w}{\Gamma(w)} \lvert m \rvert^{w-1} \sum_c c^{-2w} S_\mathfrak{a}(0, n; c)
\end{align*}
when $m\neq 0$,
are generalized Whittaker-Fourier coefficients,
\begin{equation*}
  W_w(z) = 2\sqrt y K_{w - \frac{1}{2}}(2\pi y) e(x)
\end{equation*}
is a Whittaker function, $K_\nu(z)$ is a $K$-Bessel function, and
\begin{equation*}
S_\mathfrak{a}(m,n; c) = \sum_{\left(\begin{smallmatrix} a&\cdot \\ c&d \end{smallmatrix}\right) \in \Gamma_\infty \backslash \sigma_\alpha^{-1} \Gamma / \Gamma_\infty} e\left( m \frac{d}{c} + n \frac{a}{c}\right)
\end{equation*}
is a Kloosterman sum associated to double cosets of $\Gamma$ with
\[
\Gamma_\infty = \left \langle \begin{pmatrix} 1&n \\ &1 \end{pmatrix} : n \in \mathbb{Z} \right\rangle \subset \SL_2(\mathbb{Z}).
\]
This expansion is given in Theorem 3.4 of~\cite{iwaniec2002spectral}.

Letting $\mu_j$ be an orthonormal basis of the residual and cuspidal spaces, we may expand $P_h(z,s)$ by the Spectral Theorem (as presented in Theorem~15.5 of~\cite{IwaniecKowalski04}) to get
\begin{align}
  P_h(z,s) &= \sum_j \langle P_h(\cdot, s), \mu_j \rangle \mu_j(z) \\
           &\quad + \sum_\mathfrak{a} \frac{1}{4\pi} \int_\mathbb{R} \langle P_h(\cdot, s), E_\mathfrak{a}(\cdot, \tfrac{1}{2} + it)\rangle E_\mathfrak{a}(z, \tfrac{1}{2} + it) \ dt. \label{line:PoincareLevelNContinuous}
\end{align}
This is more complicated than the $\SL_2(\mathbb{Z})$ spectral expansion in~\eqref{eq:Pspectral} for two major reasons: we are summing over cusps and the Kloosterman sums within the Eisenstein series are trickier to handle. Continuing as before, we try to understand the shifted convolution sum
\begin{equation}
  Z(s,w,f\times f) = \frac{(4\pi)^{s + k - 1}}{\Gamma(s + k - 1)}\sum_{h \geq 1}\frac{\langle \mathcal{V}_{f,f}, P_h\rangle}{h^w}
\end{equation}
by substituting the spectral expansion for $P_h(z,s)$ and producing a meromorphic continuation.

The analysis of the discrete spectrum is almost exactly the same: it is analytic for $\Re s > -\frac{1}{2} + \theta$, and so the only new facet is understanding the continuous spectrum component corresponding to~\eqref{line:PoincareLevelNContinuous}. As noted above, the continuous spectrum of $Z(s, 0, f\times f)$ has leading poles that perfectly cancel the leading pole
of $L(s, f\times f) \zeta(2s)^{-1}$.
Using analogous methods to those in Section~\ref{sec:Zswfg}, we compute the continuous spectrum of $Z(s, 0, f\times f)$ to get
\begin{align}
  &\sum_{h \geq 1}\frac{(4\pi)^{s + k - 1}}{\Gamma(s + k - 1)}\sum_{\mathfrak{a}} \frac{1}{4\pi i} \int_{(\frac{1}{2})} \langle P_h(\cdot, s), E_\mathfrak{a}(\cdot, t)\rangle \langle \mathcal{V}_{f,f}, \overline{E_\mathfrak{a}(\cdot, t)}\rangle \, dt \nonumber \\
  \begin{split}
  & = \frac{(4\pi)^k}{\Gamma(s + k - 1)\Gamma(s)}\sum_{\mathfrak{a}} \int_{-\infty}^\infty  \left( \sum_{h,c \geq 1} \frac{S_\mathfrak{a} (0, h; c)}{h^{s + it} c^{1 - 2it}}\frac{\pi^{\frac{1}{2} - it}}{\Gamma(\frac{1}{2} - it)}\right) \times \\
  &\quad \times \Gamma(s - \tfrac{1}{2} + it)\Gamma(s - \tfrac{1}{2} - it) \langle \mathcal{V}_{f,f}, \overline{E_\mathfrak{a}(\cdot, \tfrac{1}{2} + it)} \rangle \, dt.
  \end{split}
\end{align}
We've placed parentheses around the arithmetic part, including the Kloosterman sums and factors for completing a zeta function that appears within the Kloosterman sums.

\begin{remark}
  We note also that~\eqref{eq:OnAverageSquares}, as in~\cite{chandrasekharan1962functional}, suggests that $D(s,S_f \times S_f)$ has a pole at $s = \frac{1}{2}$ and that no other poles should contribute additional terms to the inverse Mellin-transform for $\Re s > 0$.
  As in Section~\ref{ssec:meromorphic_continuation}, we recognize the pole at $s = \frac{1}{2}$ coming from the continuous spectrum of $Z(s, 0, f\times f)$.

  However, the exact nature of the potential polar behavior for $\Re s \in (0, \frac{1}{2})$ cannot be determined completely by comparison with~\cite{chandrasekharan1962functional}.
  It is natural to conjecture that there are no poles of $D(s, S_f \times S_f)$ with $\Re s > 0$ except for the known pole at $s = \frac{1}{2}$.
\end{remark}

We can make similar claims about the cancellation in the case of $D(s,S_f \times S_g)$ when $f \neq g$.
Indeed, if we let $h_1=f+g$ and $h_2=f+ig$, then we have that
\[
  |S_{h_1}(n)|^2 = |S_f(n)|^2+|S_g(n)|^2 +2\Re\left( S_f(n)\overline{S_g(n)}\right)
\]
and
\[
  |S_{h_2}(n)|^2 = |S_f(n)|^2+|S_g(n)|^2 +2\Im\left( S_f(n)\overline{S_g(n)}\right).
\]
Since $D(s,S_{h_i}\times S_{h_i}), \ D(s,S_f\times S_f), \ $ and $D(s,S_g\times S_g)$ do not have poles for $\Re(s) > \frac{1}{2}$, it follows that the continuations of
\[
  \sum_{n=1}^\infty \frac{\Re\left( S_f(n)\overline{S_g(n)}\right)}{n^{s+k-1}} \quad \text{and} \quad \sum_{n=1}^\infty \frac{\Im\left( S_f(n)\overline{S_g(n)}\right)}{n^{s+k-1}}
\]
are also meromorphic in this region.
Thus this is also the case for $D(s;S_f \times S_g)$.

We summarize the results of this section with the following theorem.

\begin{theorem}\label{thm:general_weight_level_comparison}
  Let $f(z) = \sum a(n)e(nz)$ and $g(z) = \sum b(n)e(nz)$ be weight $k > 2$ cusp forms on $\Gamma$, possibly with nebentypus $\chi$ if $\Gamma=\Gamma_0(N)$. Then
  \begin{align}\label{eq:general_level_general_comparison}
    \Res_{s = 1} \!\! \sum_{n \geq 1} \frac{a(n) \overline{b(n)}}{n^{s + k - 1}} = - \Res_{s = 1} \!\! \sum_{n,h \geq 1} \frac{a(n)\overline{b(n-h)}+a(n-h)\overline{b(n)}}{n^{s + k - 1}},
  \end{align}
  or equivalently
  \begin{align}\label{eq:general_level_kloosterman}
    &-\Res_{s = 1} \frac{L(s, f\times g)}{\zeta(2s)} =\\
    =& \Res_{s = 1}\sum_{\mathfrak{a}} \sum_{h=1}^\infty \frac{1}{4\pi} \int_\mathbb{R} \langle P_h(\cdot, s), E_\mathfrak{a}(\cdot, \tfrac{1}{2} + it)\rangle \langle \mathcal{V}_{f,g} , \overline{E_\mathfrak{a}(\cdot, \tfrac{1}{2} + it)}\rangle \, dt \\
    \begin{split}
    =& \Res_{s = 1} \frac{(4\pi)^k}{\Gamma(s + k - 1)\Gamma(s)}\sum_{\mathfrak{a}} \int_{-\infty}^\infty  \left( \sum_{h,c \geq 1} \frac{S_\mathfrak{a} (0, h; c)}{h^{s + it} c^{1 - 2it}}\frac{\pi^{\frac{1}{2} - it}}{\Gamma(\frac{1}{2} - it)}\right) \\
     &\quad \times \Gamma(s - \tfrac{1}{2} + it)\Gamma(s - \tfrac{1}{2} - it) \langle \mathcal{V}_{f,g}, \overline{E_\mathfrak{a}(\cdot, \tfrac{1}{2} + it)} \rangle \, dt.
    \end{split} \label{split}
  \end{align}
\end{theorem}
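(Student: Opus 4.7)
My plan is to first observe that the two displayed identities both reduce to the single claim
\[
  \Res_{s=1} W(s;f,g) = 0.
\]
Indeed, $\sum a(n)\overline{b(n)}/n^{s+k-1} = L(s,f\times g)/\zeta(2s)$ by~\eqref{eq:rankinselberg}, the symmetrized shifted convolution sum is $Z(s,0,f\times g)$, and their sum is $W(s;f,g)$, so~\eqref{eq:general_level_general_comparison} is exactly this vanishing statement. Granting the vanishing, the reformulation~\eqref{eq:general_level_kloosterman}--\eqref{split} follows by substituting the Selberg--Poincar\'e spectral decomposition of $P_h$ into~\eqref{LHS}: for $\theta < \tfrac{1}{2}$ the discrete contribution~\eqref{line:1spectralexp} is holomorphic at $s=1$, so $\Res_{s=1}Z(s,0,f\times g)$ is entirely the residue of the continuous spectrum, which, after inserting the Fourier--Whittaker expansion of $E_\mathfrak{a}(z,\tfrac{1}{2}+it)$ at each cusp $\mathfrak{a}$, is precisely the Kloosterman-sum expression in~\eqref{split}.

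To prove the vanishing for $f = g$, I would use Mellin comparison with~\eqref{eq:CN_compare}. Proposition~\ref{prop:decomposition} combined with~\eqref{eq:smoothcutofftransform} writes $X^{-1}\sum_n |S_f(n)|^2 n^{-(k-1)} e^{-n/X}$ as an inverse Mellin transform of $D(s, S_f \times S_f)\Gamma(s)$; shifting lines of integration picks up a contribution from every pole of the integrand in the traversed strip. A simple pole of $W(s;f,f)$ at $s=1$ would produce a definite constant-order term in this asymptotic expansion. On the other hand, Abel summation applied to the Chandrasekharan--Narasimhan formula~\eqref{eq:CN_compare} produces the same smoothed asymptotic directly and shows that no such constant term is present, forcing the residue to vanish. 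For $f \neq g$ I would use the polarization identities $|S_{f+g}|^2 = |S_f|^2+|S_g|^2+2\Re(S_f\overline{S_g})$ and $|S_{f+ig}|^2 = |S_f|^2+|S_g|^2+2\Im(S_f\overline{S_g})$, apply the diagonal result to each of $f+g$, $f+ig$, $f$, and $g$ separately, and recombine, as sketched at the end of the previous subsection.

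The main technical obstacle, in my view, is not the vanishing itself but the rigorous justification of the spectral manipulation producing~\eqref{split} at $s=1$. One must interchange the sum $\sum_h$ appearing in~\eqref{LHS} with the continuous-spectrum $t$-integral and also with the Kloosterman series $\sum_c S_\mathfrak{a}(0,h;c)c^{-(1-2it)}$ that emerges from $\varphi_\mathfrak{a}(h,\tfrac{1}{2}+it)$. This requires absolute convergence of $\sum_{h,c\geq 1} S_\mathfrak{a}(0,h;c)\, h^{-(s+it)}c^{-(1-2it)}$ in a suitable right half-plane, for which Weil bounds on $S_\mathfrak{a}(0,h;c)$ together with an extension of Remark~\ref{extraremark} to the inner products $\langle \mathcal{V}_{f,g}, \overline{E_\mathfrak{a}(\cdot,\tfrac{1}{2}+it)}\rangle/\zeta^*(1-2it)$ at each cusp should suffice to control the $t$-integral. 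Once these uniform estimates are in place, meromorphic continuation past $\Re s = 1$ and extraction of the residue proceed along the contour arguments of Section~\ref{conspec}.
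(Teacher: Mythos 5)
Your overall strategy is the paper's own: reduce both displays to $\Res_{s=1}W(s;f,g)=0$, get the diagonal case by comparing the Mellin machinery of Proposition~\ref{prop:decomposition} with the Chandrasekharan--Narasimhan asymptotic~\eqref{eq:CN_compare}, handle $f\neq g$ by polarization, and obtain the Kloosterman form~\eqref{split} from the spectral expansion since the discrete spectrum~\eqref{line:1spectralexp} is holomorphic at $s=1$ (as $\theta\le\frac{7}{64}<\frac12$). However, the specific detection step you propose for the vanishing has a genuine flaw. You argue that a simple pole of $W(s;f,f)$ at $s=1$ would produce a \emph{constant-order} term in $\frac1X\sum_n |S_f(n)|^2 n^{-(k-1)}e^{-n/X}$, and that Abel summation of~\eqref{eq:CN_compare} shows no constant term is present. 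But the error term in~\eqref{eq:CN_compare} is $O(\log^2 X)$, which is unbounded; smoothing does not improve it below that, so a bounded (constant-order) discrepancy can never be ruled out by this comparison. As stated, the step ``no such constant term is present, forcing the residue to vanish'' does not go through.

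The fix, which is what the paper's comparison implicitly relies on, is to track where a hypothetical residue $r=\Res_{s=1}W(s;f,f)\neq 0$ actually enters the decomposition of Proposition~\ref{prop:decomposition}. Besides the direct term $W(s)$ (whose $s=1$ pole indeed only contributes a constant after the normalization by $X$ in~\eqref{eq:smoothcutofftransform}), the Mellin--Barnes integral, after picking up the $z=1$ pole of $\zeta(z)$, contains $W(s-1)\,\Gamma(s+k-2)/\Gamma(s+k-1)$; a pole of $W$ at $s=1$ therefore forces a pole of $D(s,S_f\times S_f)$ at $s=2$, contributing a term of size $\asymp rX$ to the normalized smoothed sum. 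That term \emph{does} contradict the $CX^{1/2}+O(\log^2X)$ asymptotic obtained by smoothing~\eqref{eq:CN_compare}, and this is the contradiction that forces $r=0$ (and hence~\eqref{eq:general_level_general_comparison}). With this correction your argument matches the paper's proof; your closing points about justifying the interchange of the $h$-sum, the $t$-integral, and the Kloosterman $c$-sum, and the growth of $\langle \mathcal{V}_{f,g},E_{\mathfrak a}\rangle$ in the spirit of Remark~\ref{extraremark}, are the same technical caveats the paper leaves at the level of a sketch.
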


\begin{remark}
  The result is essentially the same and the above argument is largely unchanged if we replace $\overline{b(n)}$ by$b(n)$.
  The only potential issue is if there is an accompanying non-real nebentypus, $\chi$, for $f$ and $g$.
  In this case, we change the group we are considering to $\Gamma_1(N)$, where the nebentypus does not affect the proof.
  Since $g$ and $\overline{T_{-1}g}$ are both weight $k$ holomorphic cusp forms for $\Gamma_1(N)$, we use that $D(s;S_f \times \overline{S_g})=D(s;S_f \times S_{\overline{T_{-1}g}})$ and we get that $D(s;S_f \times \overline{S_g})$ is also meromorphic for $\Re(s) > \frac{1}{2}$.
\end{remark}

\subsection{Complete Cancellation for Square-free Level}

Now we consider the case where the level $N$ is square-free, where our methodology explicitly demonstrates the same complete, remarkable cancellation as in level $1$.
Let $\Gamma=\Gamma_0(N)$ where $N$ is square-free.
A complete set of cusp representatives is given by $\{ \frac{1}{v} : \, v \mid N\}$.
As in~\cite{deshouillers1982kloosterman}, we can derive more explicit formulas for $\varphi_{\mathfrak{a}}(m,w)$ when $\mathfrak{a}=\frac{1}{v}$,
\begin{align*}
  \varphi_{\mathfrak{a}}(0,w)=\frac{\zeta^*(2s-1)}{\zeta^*(2s)} \left( \frac{1}{vN} \right)^s \varphi(v) \prod_{p | N} \left(1-\frac{1}{p^{2s}}\right)^{-1} \prod_{p | \frac{N}{v}} \left(1-\frac{1}{p^{2s-1}}\right)
\end{align*}
and
\begin{align*}
\varphi_{\mathfrak{a}}(m,w)= \frac{2\pi^s |m|^{s-\frac{1}{2}}}{\Gamma(s)} \left( \frac{1}{vN} \right)^s \frac{\sigma_{1-2s}^{(N)}(m)}{\zeta^{(N)}(2s)} \prod_{\substack{p | v \\ p^\alpha \| m}} (p \sigma_{1-2s}\left(p^{\alpha - 1}) - \sigma_{1-2s}(p^\alpha)\right),
  \end{align*}
  where
  \[
  \sigma_v^{(c)}(n) = \sum_{\substack{d | n \\ (d,c) = 1}} d^v
  \]
  and
  \[
  \zeta^{(N)}(2s)= \zeta(2s) \prod_{p | N} (1-p^{-2s}).
  \]
  From Hejhal~\cite{Hejhal}, we know that the Eisenstein series satisfy the functional equation
  \begin{equation}\label{eq:general_level_eisenstein_functional}
    \begin{split}
      &E_\infty(z,s) =\\
      &\quad\frac{\zeta^*(2 - 2s)}{\zeta^*(2s)}\prod_{p \mid N} \frac{1}{1 - p^{2s}} \sum_{v \mid N} \prod_{p \mid v} (1 - p)\prod_{p \mid \frac{N}{v}} (p^{1-s} - p^s)E_{\frac{1}{v}}(z, 1-s).
    \end{split}
  \end{equation}

Using these coefficients, we can explicitly describe write the continuous part of the spectrum of $Z(s, w, f\times g)$ in~\eqref{split} as
\begin{align}\label{eq:general_level_specific_continuous}
    &\frac{(4\pi)^k}{\Gamma(s+k-1)} \frac{1}{4\pi i} \int_{(0)} \sum_{v | N} \langle V_{f,g}, E_{\frac{1}{v}}(*,\frac{1}{2}-\overline{z}) \rangle \frac{\Gamma(s-\frac{1}{2} + z)\Gamma(s-\frac{1}{2}-z)}{\Gamma(s)(vN)^{\frac{1}{2}+z}} \notag \\
    & \times  \frac{\zeta(s+w-\frac{1}{2}-z) \zeta^{(\frac{N}{v})}(s+w-\frac{1}{2}+z)}{\pi^{-(\frac{1}{2}+z)} \Gamma(\frac{1}{2}+z) \zeta^{(N)}(1+2z)} \prod_{p | v} (p^{\frac{3}{2} + z -s -w} - 1) \,dz.
\end{align}

For $\Re (s+w)$ close to $\frac{3}{2}$, we obtain residual terms from the poles at $z = \pm(s+w-\frac{3}{2})$ through the same procedure for moving lines of integration from Section~\ref{sec:Zswfg}.
For $z=s+w-\frac{3}{2}$, there is a pole only when the product over primes dividing $v$, as otherwise the simple pole from the zeta function is cancelled by the zero of the product.
Therefore, a pole occurs only when $v=1$.
In that case, the $z$ residue is:
\begin{equation} \label{polepart1}
  \begin{split}
    \frac{(4\pi)^k}{4\Gamma(s+k-1)} &\langle V_{f,g}, E_1(*,2-\overline{s}-\overline{w}) \rangle \times \\
    &\frac{\Gamma(2s+w-2) \Gamma(1-w)}{\Gamma(s+w-1)\Gamma(s)} \frac{\pi^{s+w-1}}{N^{s+w-1}}.
  \end{split}
 \end{equation}
Setting $w=0$ and then taking residue at $s=1$, we get
\begin{equation*}
  -\frac{(4\pi)^k}{4\Gamma(k)} \Res_{s=1} \langle V_{f,g}, E_1(*,\overline{s}) \rangle = -\frac{1}{2} \Res_{s=1} \sum_{n \geq 1} \frac{a(n)\overline{b(n)}}{n^{s+k-1}}.
\end{equation*}

When $z=\frac{3}{2}-s-w$, there is a pole from the zeta function in the numerator with residue
\begin{equation}
  \label{polepart2}
  \begin{split}  \frac{(4\pi)^k}{2\Gamma(s+k-1)} &\sum_{v | N} \langle V_{f,g}, E_{\frac{1}{v}}(*,\overline{s}+\overline{w}-1) \rangle \frac{\Gamma(1-w)\Gamma(2s+w-2)}{\Gamma(s) (vN)^{2-s-w}} \\
  &\times \frac{\zeta(2s+2w-2) \prod_{p | \frac{N}{v}} (1-\frac{1}{p}) \prod_{p | v} (p^{3-2s-2w} - 1)}{\pi^{-(2-s-w)} \Gamma(2-s-w) \zeta^{(N)}(4-2s-2w)}.
  \end{split}
\end{equation}
We set $w=0$.
Using the functional equation from Hejhal~\cite{Hejhal}, which for square-free $N$ gives
\begin{equation*}
  E_\infty(z,s) = \frac{\zeta^*(2-2s)}{\zeta^*(2s)} \prod_{p | N} \frac{1}{1-p^{2s}} \sum_{v | N} \prod_{p | v}(1-p) \prod_{p | \frac{N}{v}}(p^{1-s}-p^s) E_{\frac{1}{v}}(z,1-s)
 \end{equation*}
and
\begin{equation*}
  E_{1}(z,s) = \frac{\zeta^*(2-2s)}{\zeta^*(2s)} \prod_{p | N} \frac{1}{1-p^{2s}} \sum_{v | N} \prod_{p | \frac{N}{v}}(1-p) \prod_{p | v}(p^{1-s}-p^s) E_{\frac{1}{v}}(z,1-s).
\end{equation*}
Notice that this is the same as the residue at $z=s+w-\frac{3}{2}$, so we can explicitly verify the cancellation of poles of $Z(s, 0, f\times g)$ at $s = 1$ as guaranteed by Theorem~\ref{thm:general_weight_level_comparison}.

From the continuous spectrum~\eqref{eq:general_level_specific_continuous}, there are residual terms coming from $z = \pm (s + w - \frac{1}{2})$.
In Section~\ref{sec:Zswfg}, setting $w = 0$ and examining the corresponding residual term in the level $1$ case led to a term perfectly cancelling the diagonal contribution $L(s, f\times f)/\zeta(2s)$.

We now investigate this residual term and cancellation in the square-free level case.
Recall that we now have $w = 0$.
The residual term from $z = \frac{1}{2} - s$, occurring when $z = \frac{1}{2} - s$ in the zeta function $\zeta^{(\frac{N}{v})}(s - \frac{1}{2} + z)$ in the numerator of~\eqref{eq:general_level_specific_continuous}, vanishes except when $v = N$ since $\zeta^{(\frac{N}{v})}(0) = 0$ if $\frac{N}{v} > 1$.
The residual term when $v = N$ is
\begin{equation*}
  -\frac{(4\pi)^k}{4\Gamma(s+k-1)} \langle V_{f,g}, E_\infty(*,\overline{s}) \rangle \frac{\pi^{1-s}N^{2s-2}\Gamma(2s-1)\zeta(2s-1) }{\Gamma(s) \Gamma(1-s) \zeta^{(N)}(2-2s)}  \prod_{p | N} (p^{2-2s} - 1).
\end{equation*}
Using duplication formula and functional equation for the completed zeta function, this simplifies to
\begin{equation}\label{eq:final_continuous_cancellation_part1}
 -\frac{1}{2} \sum_{n \geq 1} \frac{a(n) \overline{b(n)}}{n^{s+k-1}}.
 \end{equation}
 (Note that this is very similar to the analysis of~\eqref{eq:secondresidual_example}).

 The residual term from $z = s-\frac{1}{2}$ coming from the other zeta function in the numerator of~\eqref{eq:general_level_specific_continuous} is
\begin{equation*}
 \frac{(4\pi)^k}{2\Gamma(s+k-1)} \sum_{v | N} \langle V_{f,g}, E_{\frac{1}{v}}(*,1-\overline{s}) \rangle \frac{\Gamma(2s-1)}{\Gamma(s) (vN)^s} \frac{\zeta(0) \zeta^{(\frac{N}{v})}(2s-1)}{\pi^{-s} \Gamma(s) \zeta^{(N)}(2s)} \prod_{p | v} (p-1).
 \end{equation*}
 Using the functional equation~\eqref{eq:general_level_eisenstein_functional} of the Eisenstein series and the relation
\begin{align*}
  &\frac{\zeta(0) \zeta^{(\frac{N}{v})}(2s-1)}{\pi^{-s}(vN)^s  \zeta^{(N)}(2s)} \prod_{p | v} (p-1) \\
  &\qquad= \frac{\zeta(2s-1)}{(vN)^s\zeta^*(2s)} \frac{\prod_{p | v} (p-1) \prod_{p | \frac{N}{v}} (1-p^{1-2s})}{\prod_{p | N} (1-p^{-2s})} \\
  &\qquad= \frac{\pi^{s-\frac{1}{2}} }{\Gamma(s-\frac{1}{2})} \frac{\zeta^*(2-2s)}{\zeta^*(2s)} \frac{\prod_{p | v} (p-1) \prod_{p | \frac{N}{v}} (p^s-p^{1-s})}{\prod_{p | N} (p^{2s} - 1)},
\end{align*}
we have
\begin{align*}
  \frac{(4\pi)^k}{2\Gamma(s+k-1)} &\sum_{v | N} \langle V_{f,g}, E_{\frac{1}{v}}(*,1-\overline{s}) \rangle \frac{\Gamma(2s-1)}{\Gamma(s) (vN)^s} \frac{\zeta(0) \zeta^{(\frac{N}{v})}(2s-1)}{\pi^{-s} \Gamma(s) \zeta^{(N)}(2s)} \prod_{p | v} (p-1)\notag \\
  &= -\frac{(4\pi)^k}{4\Gamma(s+k-1)} \frac{\pi^{s-1}}{4^{1-s}} \langle V_{f,g}, E_{\infty}(*,\overline{s}) \rangle = -\frac{1}{2} \sum_{n \geq 1} \frac{a(n) \overline{b(n)}}{n^{s+k-1}}.
\end{align*}
This is exactly the same as~\eqref{eq:final_continuous_cancellation_part1}, and together these add together to perfectly cancel $L(s, f\times g) \zeta(2s)^{-1}$.
Thus we see these residual terms in $Z(s,0,f \times g)$ perfectly cancel the Rankin-Selberg diagonal for $\Re s \leq \frac{1}{2}$.

Finally, as in Section~\ref{sec:Zswfg}, we conclude that $D(s, S_f \times S_g)$ is analytic for $\Re(s) > -\frac{1}{2} + \theta$, except for a simple pole at $s = \frac{1}{2}$ coming from~\eqref{polepart1} and~\eqref{polepart2}.
Here, $\theta = \max_j \Im(t_j) \leq \frac{7}{64}$ denotes the progress towards Selberg's Eigenvalue Conjecture.

In other words, the meromorphic properties of $D(s, S_f \times S_g)$ for square-free level are very similar to the meromorphic properties on level $1$.
It is now clear that performing the exact same Mellin transform analysis as in Section~\ref{sec:second_moment_of_sums} gives the following generalization of Theorem~\ref{thm:second_moment}.

\begin{theorem}\label{thm:second_moment2}
  Suppose that $f$ and $g$ are holomorphic cusp forms of integral weight $k$ on $\Gamma_0(N)$, $N$ square-free.
  Then for any $\epsilon > 0$,
  \begin{equation} \label{maineq1}
    \frac{1}{X} \sum_{n \geq 1}\frac{S_f(n)\overline{S_g(n)}}{n^{k - 1}}e^{-n/X} = CX^{\frac{1}{2}} + O_{f,g,\epsilon}(X^{-\frac{1}{2} + \theta + \epsilon})
  \end{equation}
  where $C$ is is a calculable residue depending only on $f$ and $g$, and $\theta = \max_j \Im (t_j)$ denotes the progress towards Selberg's Eigenvalue Conjecture for $\Gamma_0(N)$.
\end{theorem}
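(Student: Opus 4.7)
The plan is to proceed exactly as in Section~\ref{sec:second_moment_of_sums}, relying on the analytic information about $D(s,S_f\times S_g)$ just established for square-free $N$. The decomposition of Proposition~\ref{prop:decomposition} is completely general and so still gives
\[
D(s,S_f\times S_g) = W(s;f,g) + \frac{1}{2\pi i}\int_{(\gamma)}W(s-z;f,g)\zeta(z)\frac{\Gamma(z)\Gamma(s-z+k-1)}{\Gamma(s+k-1)}\,dz.
\]
The preceding subsection has already done the serious work: combining the cancellation computation at $s=1$ (residues \eqref{polepart1} and \eqref{polepart2} cancelling the Rankin–Selberg pole) with the cancellation of $L(s,f\times g)/\zeta(2s)$ against the residual terms \eqref{eq:final_continuous_cancellation_part1} at $z=\pm(s-\tfrac12)$, one gets that $W(s;f,g)$ is meromorphic on $\Re s>-\tfrac12+\theta$ with a single simple pole at $s=\tfrac12$. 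This is the crucial analytic input that lets the level-one argument carry over verbatim.

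Then I would apply the integral transform
\[
\frac{1}{2\pi i}\int_{(\sigma)} D(s,S_f\times S_g)X^s\Gamma(s)\,ds = \sum_{n\geq 1}\frac{S_f(n)\overline{S_g(n)}}{n^{k-1}}e^{-n/X}
\]
for $\sigma$ in the region of absolute convergence, and split it into the two pieces corresponding to $W(s;f,g)$ and the Mellin–Barnes double integral, exactly as in \eqref{eq:smoothintegral1}–\eqref{eq:smoothintegral2}. For the first piece, I shift the $s$-line to $\Re s=-\tfrac12+\theta+\epsilon$, picking up the residue of $W(s;f,g)\Gamma(s)X^s$ at $s=\tfrac12$; polynomial growth of $W$ in vertical strips (by Remark~\ref{extraremark}, which is geometric in character and unchanged at square-free level) together with the exponential decay of $\Gamma(s)$ control the shifted integral by $O_\epsilon(X^{-\tfrac12+\theta+\epsilon})$. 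For the second piece, I first shift the $z$-line from $(2)$ to $(\epsilon)$, passing the simple pole of $\zeta(z)$ at $z=1$; the residual single integral is handled identically to the first piece and contributes the dominant $CX^{\tfrac32}$ through the pole of $W(s-1;f,g)$ at $s=\tfrac32$, then an error of size $O_\epsilon(X^{\tfrac12+\theta+\epsilon})$; the remaining double integral is shifted to $\Re s=\tfrac12+2\epsilon$ with no new poles encountered and decays in both variables, contributing $O_\epsilon(X^{\tfrac12+\epsilon})$. Dividing by $X$ yields the claimed asymptotic with the error $O_\epsilon(X^{-\tfrac12+\theta+\epsilon})$.

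The constant $C$ is then the residue of $W(s-1;f,g)\Gamma(s)(s+k-2)^{-1}X^s/X$ at $s=\tfrac32$, which unpacks to a computable multiple of the combined residues \eqref{polepart1}+\eqref{polepart2} of $Z(s,0,f\times g)$ at $s=1$ — or equivalently, by the cancellation with the Rankin–Selberg diagonal, a computable multiple of $L(\tfrac32,f\times g)/\zeta(3)$ with a factor depending on $N$ through the Eisenstein series coefficients $\varphi_{\mathfrak a}$.

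The only real obstacle was already surmounted in the preceding subsection, namely explicitly checking, via Hejhal's functional equation \eqref{eq:general_level_eisenstein_functional} and the explicit $\varphi_{\mathfrak a}(m,w)$, that no additional polar contributions from the finitely many cusps or from shifted zeta zeroes of $\zeta^{(N/v)}$ intrude into the strip $\Re s > -\tfrac12+\theta$. Given that, the contour-shift argument for the Mellin transform is essentially mechanical and proceeds as in the level-one case without modification.
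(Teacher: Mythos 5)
Your proposal is correct and follows essentially the same route as the paper: the paper proves this theorem precisely by citing the square-free cancellation computations (\eqref{polepart1}, \eqref{polepart2}, \eqref{eq:final_continuous_cancellation_part1}) to conclude that $D(s, S_f\times S_g)$ is analytic for $\Re s > -\tfrac12+\theta$ apart from the simple pole at $s=\tfrac12$, and then repeating the Mellin transform and contour-shift analysis of Section~\ref{sec:second_moment_of_sums} verbatim. One small slip in your description of the constant: $C$ unpacks to the residues of \eqref{polepart1} and \eqref{polepart2} at $s=\tfrac12$ (so that $s-1=\tfrac12$ in the shifted integrand), not at $s=1$; the residues at $s=1$ are the ones that cancel the Rankin--Selberg pole.
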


\section*{Acknowledgements}

We would like to thank Jeff Hoffstein at Brown University, who introduced us to this problem by asking a question at a talk given by Winfried Kohnen at Jeff's 61st birthday conference, and who has besides mentored us all.

\vspace{20 mm}
\bibliographystyle{alpha}
\bibliography{compiled_bibliography}

\end{document}